\begin{document}

 %\baselineskip 16pt
 %\addtolength{\parskip}{8pt}
 %\parindent0pt

%----------------------------------------------------------

	\newtheorem{theorem}{Theorem}[section]
	\newtheorem{proposition}[theorem]{Proposition}
	\newtheorem{corollary}[theorem]{Corollary}
	\newtheorem{lemma}[theorem]{Lemma}
	\newtheorem{definition}[theorem]{Definition}	
	\newtheorem{assum}{Assumption}[section]
	\newtheorem{example}[theorem]{Example}
	\newtheorem{remark}[theorem]{Remark}
	\newtheorem*{conjecture}{Conjecture}

\newcommand{\R}{\mathbb{R}}
\newcommand{\Rn}{{\mathbb{R}^n}}
\newcommand{\T}{\mathbb{T}}
\newcommand{\Tm}{{\mathbb{T}^m}}
\newcommand{\TmRn}{{\mathbb{T}^m\times\mathbb{R}^n}}
\newcommand{\Z}{\mathbb{Z}}
\newcommand{\Zm}{{\mathbb{Z}^m}}
\newcommand{\C}{\mathbb{C}}
\newcommand{\Q}{\mathbb{Q}}
\newcommand{\N}{\mathbb{N}}

	\newcommand{\supp}{{\rm supp}{\hspace{.05cm}}}
	\newcommand {\lb}{{\langle}}
	\newcommand {\rb}{\rangle}
 	\numberwithin{equation}{section}

\title[$L^p$ bounds for     Stein's   spherical  maximal    operators]{$L^p$ bounds for     Stein's    spherical  maximal    operators}

 \author[ N. Liu,  M. Shen, \ L. Song  and L.  Yan]{ Naijia Liu, \ Minxing Shen, \ Liang Song
	and \ Lixin Yan}

\address{Naijia Liu,
	Department of Mathematics,
	Sun Yat-sen University,
	Guangzhou, 510275,
	P.R.~China}
\email{liunj@mail2.sysu.edu.cn}	

\address{Minxing Shen,
	Department of Mathematics, Sun Yat-sen
	University, Guangzhou, 510275, P.R. China}
\email{shenmx3@mail2.sysu.edu.cn}	

\address{Liang Song,
	Department of Mathematics,
	Sun Yat-sen University,
	Guangzhou, 510275,
	P.R.~China}
\email{songl@mail.sysu.edu.cn}

\address{
	Lixin Yan, Department of Mathematics, Sun Yat-sen  University, Guangzhou, 510275, P.R. China}
\email{mcsylx@mail.sysu.edu.cn}

\date{\today}
\subjclass[2010]{42B25, 22E30, 43A80}

\keywords{Spherical  maximal     operators,  wave equation,  local smoothing estimates, Fourier integral operators.}

	\begin{abstract}
		Let ${\frak M}^\alpha$ be    the spherical  maximal  operators of  complex  order $\alpha$ on
		${\mathbb R^n}$.
		In this article we show that when $n\geq 2$,  suppose
		\begin{eqnarray*}
			\|	{\frak M}^{\alpha}  f \|_{L^p({\mathbb R^n})} \leq C\|f \|_{L^p({\mathbb R^n})}
		\end{eqnarray*}
	holds for some $\alpha$ and $p\geq 2$,  then  we must  have
	$
		{\rm Re}\,\alpha \geq   \max \{1/p-(n-1)/2,\ -(n-1)/p \}.
$
When $n=2$, we prove that
  $	\|	{\frak M}^{\alpha}  f \|_{L^p({\mathbb R^2})} \leq C\|f \|_{L^p({\mathbb R^2})}$   if  ${\rm Re}\  \! \alpha>\max\{1/p-1/2,\ -1/p\}$, and hence
	the range of $\alpha$ is sharp  in the sense that  the estimate fails   for  ${\rm Re}\ \alpha <\max\{1/p-1/2, -1/ p\}.$
	
	\end{abstract}

	\maketitle

\section{Introduction}
\setcounter{equation}{0}

In 1976    Stein \cite{St}  introduced   the spherical  maximal  means ${\frak M}^\alpha f(x)= \sup_{t>0}
|{\frak M}^\alpha_tf (x) |$  of (complex) order $\alpha$,  where
\begin{eqnarray}\label{e1.1}
	{\frak M}^\alpha_t f (x) =
	{1\over   \Gamma(\alpha)  }  \int_{|y|\leq 1} \left(1-{|y|^2 }\right)^{\alpha -1} f(x-ty)\,\text{d}y.
\end{eqnarray}
These means are priori only defined for ${\rm Re}\  \alpha>0$, but the definition can be extended to   all complex $\alpha$  by analytic continuation.
In the  case $\alpha=1$,  ${\frak M}^\alpha$ corresponds to the Hardy-Littlewood maximal operator and in the case  $\alpha=0$, one  recovers the spherical maximal means  ${\frak M}  f(x)= \sup_{t>0}
|{\frak M}_tf (x) |$ in which
\begin{eqnarray}\label{e1.2}
{\frak M}_tf (x) =
{1\over   \Gamma(\alpha)  }  \int_{{\mathbb S}^{n-1}}   f(x-ty) \,\text{d}\sigma(y), \ \ \ (x, t)\in {\mathbb R^n}\times {\mathbb R^+},
\end{eqnarray}
where $\text{d}\sigma$ is the induced Lebesgue measure on the unit sphere ${\mathbb S}^{n-1}\subseteq {\mathbb R^n}.$ In \cite[Theorem 2]{St},
 Stein showed that
 \begin{eqnarray}\label{e1.3}
 	\|	{\frak M}^{\alpha}  f \|_{L^p({\mathbb R^n})} \leq C\|f \|_{L^p({\mathbb R^n})}
 \end{eqnarray}
% the   operator ${\frak M}^\alpha$ is bounded in $L^p({\mathbb R^n})$
in the following circumstances:
 \begin{eqnarray}\label{e1.4}
{\rm Re}\, \alpha>1-n+ {n\over p} \ \ \ \     {\rm when}\     1<p\leq 2;
\end{eqnarray}
or
 \begin{eqnarray}\label{e1.5}
\ \ {\rm Re}\, \alpha>{2-n\over p}   \ \ \ \   \ \ \ \  {\rm when}\   2\leq p\leq \infty.
\end{eqnarray}
The above maximal theorem tells us  that    when $n\geq 3$,  the  maximal operator ${\frak M} $ is bounded on $L^p({\mathbb R^n})$ for the   range of $p>n/(n-1)$. This range of $p$ is sharp,
as has been pointed out in \cite{St, SWa},  no such result can hold for $p\leq n/(n-1)$
if $n\geq 2$.

Some 10 years passed before  Bourgain \cite{Bo} finally proved that  the maximal operator    ${\frak M} $ is bounded on $L^p{(\mathbb R^2)} $ for $p>2$.    Bourgain's theorem says that there exists $\epsilon(p)>0$ such that
\begin{eqnarray}\label{e1.6}
\|	{\frak M}^{\alpha}  f \|_{L^p({\mathbb R^2})} \leq C\|f \|_{L^p({\mathbb R^2})}, \ \ \  {\rm Re}\, \alpha > -\epsilon(p), \ \ \  2< p< \infty.
\end{eqnarray}
This result cannot hold even for $\alpha=0$ when $p=2$, see \cite{St}.
An alternative proof of Bourgain's result was subsequently found by
  Mockenhaupt, Seeger and Sogge \cite{MSS}, who used   a  local smoothing estimate  for the solutions  of the   wave operator. In 2017,   Miao, Yang and Zheng \cite{MYZ} improved certain range of $\alpha$ for $L^p$-bounds for the operator 	${\frak M}^{\alpha}$   by using  the Bourgain-Demeter decoupling theorem  \cite{BD}.  All these refinements can be stated altogether as follows:
  For $  n\geq 2$ and $p\geq 2$,  \eqref{e1.3} holds
whenever
\begin{eqnarray}\label{e1.7}
{\rm Re} \, \alpha >\max\left\{{ 1-n\over 4} +{3-n\over 2p}, \,  {1-n\over p} \right\}.
\end{eqnarray}
The above range $\alpha$ in \eqref{e1.7} for $p>2$ is strictly wider than the range of $\alpha$ in \eqref{e1.5}. However, the range $\alpha$ in  \eqref{e1.7} is not optimal.

As mentioned above, the proof of the range  of $\alpha$ in  \eqref{e1.7}   relies on the   progress concerning Sogge's local smoothing conjecture, as originally formulated by Sogge \cite{S2}:
For $n\geq 2$ and $p\geq 2n/(n-1)$, one has
\begin{eqnarray}\label{e1.8}
	\left( \iint_{{\mathbb R^n}\times [1,2]} \big| u(x,t)\big|^p \,\text{d}x\text{d}t\right)^{1/p}  \leq C\|f\|_{W^{\gamma, p}({\mathbb R^n})}, \ \
	\ \ \
	{\rm if}\ \  \gamma>{n-1\over 2}-{n\over p},
\end{eqnarray}
where $u(x,t)$  is the solution of the Cauchy problem for the wave equation in ${\mathbb R^{n}\times {\mathbb R}}:$
\begin{eqnarray}\label{e1.9}
	\left \{
	\begin{array}{rll}
		\left((\partial/\partial t)^2-\Delta\right) u(x,t)  & =&0, \\[5pt]
		u|_{t=0}& =&0, \\[5pt]
		(\partial/\partial t) u|_{t=0}& =&f.
	\end{array}
	\right.	
\end{eqnarray}
The local smoothing conjecture has been studied in numerous papers, see for instance \cite{BD, GS,  GWZ, LW, MYZ, MSS,   S1,    W} and the references therein. When $n=2$, sharp results follow by the work of Guth, Wang and Zhang \cite{GWZ}. When $n\geq 3$, the conjecture holds for all $p\geq {2(n+1)/ (n-1)}$ by the Bourgain-Demeter decoupling theorem \cite{BD} and the method of \cite{W}.

The aim of this article is to prove the following result.

\begin{theorem}\label{th1.1} Let $p\geq 2$.
	\begin{enumerate}
		\item[\rm(i)] Let  $n\geq 2$.   Suppose \eqref{e1.3} holds  for some $\alpha\in\mathbb{C}$. Then  we must have
		$$
		{\rm Re}\,\alpha\geq  \max\left\{{1\over p}-{n-1\over 2},\ -{n-1\over p}\right\}.
		$$
	
		\medskip

		\item[\rm(ii)] Let  $n=2$.  Then  the estimate \eqref{e1.3} holds  if
		$$
		{\rm Re}\,\alpha>  \max\left\{{1\over p}-{1\over 2},\ -{1\over p}\right\}.
		$$
	\end{enumerate}
	
\end{theorem}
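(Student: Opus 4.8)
The proof naturally splits into two parts: the necessity of the condition in~(i) and the sufficiency in~(ii).

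For part~(i), the plan is to test the inequality~\eqref{e1.3} against suitable families of functions and let parameters degenerate. The term $-{(n-1)/p}$ in the lower bound is the ``global'' obstruction already isolated by Stein and Sawyer--Wainger, and it can be recovered by testing against a fixed compactly supported bump and exploiting the $t\to\infty$ behavior of ${\frak M}^\alpha_t f$, whose kernel decays like $|\xi|^{-(n-1)/2-{\rm Re}\,\alpha}$ after a stationary phase analysis of the Bochner--Riesz-type multiplier; the failure of $L^p$ boundedness when ${\rm Re}\,\alpha<-{(n-1)/p}$ comes from the fact that ${\frak M}^\alpha_t f$ behaves like a constant multiple of $t^{-(n-1)-{\rm Re}\,\alpha}$ times the integral of $f$ on large balls, which is not in $L^p$ for the relevant exponents. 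The term ${1/p}-{(n-1)/2}$ is the new ``local'' obstruction: here I would test against functions Fourier-concentrated at frequency $\lambda$, i.e. $f=f_\lambda$ with $\widehat{f_\lambda}$ supported in an annulus $|\xi|\sim\lambda$ and of size one there, so that $\|f_\lambda\|_p\sim \lambda^{n(1-1/p)\cdot\text{(something)}}$ by the uncertainty principle, and estimate from below $\sup_{t}|{\frak M}^\alpha_t f_\lambda(x)|$ on a set of controlled measure. The key point is that the operator ${\frak M}^\alpha_t$ at frequency $\lambda$ gains only $\lambda^{-(n-1)/2-{\rm Re}\,\alpha}$ in amplitude but the supremum over $t$ in a unit interval, combined with a Knapp-type example or a focusing example where the wave refocuses, forces the loss $\lambda^{1/p}$ to appear; balancing the powers of $\lambda$ yields exactly ${\rm Re}\,\alpha\geq {1/p}-{(n-1)/2}$. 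I expect the careful bookkeeping of which example (Knapp vs.\ focusing) is extremal in which range of $p$ to be the only delicate point in~(i), but it is essentially a matter of optimizing explicit powers.

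For part~(ii), the plan is to reduce~\eqref{e1.3} in dimension $n=2$ to the sharp two-dimensional local smoothing estimate of Guth--Wang--Zhang~\cite{GWZ}, following the Mockenhaupt--Seeger--Sogge strategy~\cite{MSS} but now exploiting the full strength of~\eqref{e1.8} together with the extra smoothing encoded by ${\rm Re}\,\alpha>0$ or the near-singularity when ${\rm Re}\,\alpha<0$. First I would use analytic interpolation in $\alpha$ (Stein's interpolation theorem) between an easy estimate at large ${\rm Re}\,\alpha$ (where ${\frak M}^\alpha$ is dominated by the Hardy--Littlewood maximal function, handled by~\eqref{e1.5} or trivially) and the endpoint we wish to reach, so that it suffices to prove~\eqref{e1.3} for a single well-chosen line ${\rm Re}\,\alpha=$ const in each of the two regimes ${1/p}>{1/2}$ (i.e. $p<2$, excluded here, so really the regime is governed by $-1/p$ for $2\le p\le 4$) and the regime $p\ge 4$ where ${1/p}-{1/2}$ dominates. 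Next, after a Littlewood--Paley decomposition $f=\sum_j f_j$ with $\widehat{f_j}$ supported in $|\xi|\sim 2^j$, I would write ${\frak M}^\alpha_t f_j$ as a Fourier integral operator of order $-1/2-{\rm Re}\,\alpha$ associated to the wave equation, bound $\sup_{t\in[1,2]}|{\frak M}^\alpha_t f_j|$ by a square function via the Sobolev embedding in $t$ (paying $\|\partial_t {\frak M}^\alpha_t f_j\|$, which costs one extra factor of $2^j$ split as $2^{j/p}\cdot 2^{j(1-1/p)}$), and then invoke~\eqref{e1.8} with $n=2$ and the sharp exponent $\gamma>{1/2}-{2/p}$ available for all $p\ge 4$ from~\cite{GWZ}. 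Summing the resulting geometric series in $j$ requires the net power of $2^j$ to be negative, which translates precisely into ${\rm Re}\,\alpha>\max\{{1/p}-{1/2},-{1/p}\}$; a separate, simpler argument using Bourgain's original bound~\eqref{e1.6} or the $L^2$ theory handles the dyadic pieces near $t\to 0$ and $t\to\infty$. The main obstacle is making the local-smoothing input and the $t$-maximal-to-square-function reduction interact so that no $\epsilon$-loss accumulates across the two frequency-balancing steps and the endpoint exponent in~\eqref{e1.8} is used with the correct $p$; handling the case $2\le p\le 4$, where the bound comes from the $-1/p$ branch and one must instead interpolate with Bourgain's $L^2$-type result rather than use~\eqref{e1.8} directly, is where I expect to spend the most care.
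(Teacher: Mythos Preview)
Your plan for part~(ii) is essentially the paper's: Littlewood--Paley decompose, pass from $\sup_{t\in[1,2]}$ to $L^p_t$ via the elementary Sobolev-type lemma (Lemma~\ref{le2.2}), feed in the Guth--Wang--Zhang local smoothing estimate, and sum the geometric series in~$j$. The analytic-interpolation step you propose is not actually needed; the paper packages everything into Proposition~\ref{prop4.1}, which directly gives \eqref{e1.3} whenever ${\rm Re}\,\alpha>\max\{-(n-1)/p,\ s-(n-1)/2+1/p\}$ once a local smoothing bound at regularity $s$ is available, and then one simply inputs the sharp $n=2$ value of $s$.

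Part~(i), however, has a genuine gap. The operator ${\frak M}^\alpha_t$ is, after the Bessel expansion, essentially a \emph{sum} of two half-wave propagators $e^{it\sqrt{-\Delta}}$ and $e^{-it\sqrt{-\Delta}}$ with amplitude $|\xi|^{-(n-1)/2-\alpha}$. The whole difficulty in the necessity direction---and the paper's main contribution---is to build test functions for which these two pieces do not interfere destructively. Your proposal never confronts this cancellation issue. Concretely, for the bound ${\rm Re}\,\alpha\geq -(n-1)/p$ your suggested $t\to\infty$ argument with a fixed bump does not yield the correct exponent (a back-of-the-envelope stationary-phase computation gives $-(n-1)/p'$, not $-(n-1)/p$); the paper instead works at a \emph{fixed} time $t=1$ and chooses $\hat f$ supported in a narrow cone around a direction $v_1$, so that $e^{i\sqrt{-\Delta}}f$ and $e^{-i\sqrt{-\Delta}}f$ concentrate near opposite poles of the unit sphere and can be estimated separately (Lemma~\ref{le3.2}). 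This produces a singularity of order $\big||x|-1\big|^{\beta-(n+1)/2}$ near $|x|=1$, whose $L^p$ blowup forces exactly $s\geq (n-1)|1/2-1/p|$, i.e.\ ${\rm Re}\,\alpha\geq -(n-1)/p$. For the bound ${\rm Re}\,\alpha\geq 1/p-(n-1)/2$ your Knapp/wave-packet intuition is correct, but again the paper must argue (Lemma~\ref{le3.5}) that the $e^{+it|\xi|}$ contribution is negligible on the relevant tube while the $e^{-it|\xi|}$ contribution is large; this separation is achieved by restricting attention to $x_1>0$ and exploiting the rapid decay of the ``wrong-sign'' wave there. Without this separation step, both of your counterexample sketches are incomplete.
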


\iffalse
We would like to mention that it is commented in \cite[4.10, p.519]{St1}   that the optimal results for $p>2$ and $n\geq 2$ ``are still a mystery".
Our   Theorem~\ref{th1.1}   gives an affirmative answer  in dimension $n=2$  to
show sharpness of $\alpha$ of the estimate \eqref{e1.3}
  in the sense  that the estimate fails for $	{\rm Re}\,\alpha <  \max\left\{{1/p}-{1/2},\ -{1/p}\right\}$.

 %We would like to mention what happens when $n\geq 3$ (and $p\geq 2$) remains open.
 \fi

 Let $p\geq 2$ and  $\alpha =(3-n)/2$. For an appropriate constant $c_n$, we have that $c_nt ({\frak M}_t^\alpha f)(x)= u(x,t)$, where $u$ is the solution of the wave equation  \eqref{e1.9},  see \cite[4.10, p.519]{St1}.
As a consequence of (i) of Theorem~\ref{th1.1}, we have the following corollary.

\begin{corollary} \label{coro1.2}     Let $n\geq 4$.  Then
	$$
 \left \| \sup_{t>0} \left|{u(x,t)\over t}\right| \right\|_{L^p({\mathbb R^n})} \leq C_p\|f\|_{L^p({\mathbb R^n})}
$$
can not hold   whenever  $p> 2(n-1)/(n-3)$.	
\end{corollary}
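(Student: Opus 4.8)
The plan is to deduce the corollary from part (i) of Theorem~\ref{th1.1} by a scaling/duality argument that converts the weighted wave maximal bound into an $L^p$ bound for a spherical maximal operator of a specific complex order, and then to check that the resulting order violates the necessary condition. First I would recall the identity $c_n t\,({\frak M}_t^\alpha f)(x) = u(x,t)$ with $\alpha = (3-n)/2$, so that $\sup_{t>0}|u(x,t)/t| = c_n\,{\frak M}^{(3-n)/2} f(x)$. Hence the weighted wave maximal estimate in the statement is, after absorbing the constant $c_n$, exactly the estimate \eqref{e1.3} with $\alpha = (3-n)/2$. Therefore, if the displayed inequality held for some $p$, Theorem~\ref{th1.1}(i) would force
\begin{equation*}
	{\rm Re}\,\alpha = \frac{3-n}{2} \geq \max\left\{\frac{1}{p} - \frac{n-1}{2},\ -\frac{n-1}{p}\right\}.
\end{equation*}

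The second step is purely arithmetic: I would check for which $p$ this inequality is false. Comparing $\tfrac{3-n}{2}$ with the second term $-\tfrac{n-1}{p}$, the condition $\tfrac{3-n}{2} \geq -\tfrac{n-1}{p}$ rearranges (using $n\geq 4$, so $3-n<0$) to $p \leq \tfrac{2(n-1)}{n-3}$. Thus when $p > \tfrac{2(n-1)}{n-3}$ the necessary condition coming from the second term in the maximum is violated, so \eqref{e1.3} with $\alpha=(3-n)/2$ cannot hold, and consequently neither can the weighted wave maximal estimate. (One should also verify that the first term $\tfrac1p - \tfrac{n-1}{2}$ does not save the estimate: since $\tfrac1p - \tfrac{n-1}{2} \leq \tfrac{n-1}{2}\cdot(\text{something}) $, in fact $\tfrac{3-n}{2} \geq \tfrac1p - \tfrac{n-1}{2}$ is equivalent to $\tfrac1p \leq 1$, which always holds for $p\geq 1$; so the first term never obstructs, and the binding constraint is exactly the second one.)

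I do not anticipate a genuine obstacle here, since the corollary is a direct specialization of Theorem~\ref{th1.1}(i); the only point requiring a line of care is making the constant $c_n$ and the factor of $t$ explicit so that the equivalence between $\sup_{t>0}|u(x,t)/t|$ and ${\frak M}^{(3-n)/2}f$ is exact rather than merely up to lower-order terms — this is where one invokes the cited formula from \cite[4.10, p.519]{St1}. Once that identification is in place, the contrapositive of Theorem~\ref{th1.1}(i) together with the elementary inequality $p > \tfrac{2(n-1)}{n-3} \iff \tfrac{3-n}{2} < -\tfrac{n-1}{p}$ finishes the argument. It is worth remarking that $n\geq 4$ is needed precisely so that $n-3>0$ and the threshold $\tfrac{2(n-1)}{n-3}$ is a finite exponent $>2$; for $n=3$ one has $\alpha=0$ and the estimate is exactly Bourgain-type/Stein's spherical maximal bound, a different regime.
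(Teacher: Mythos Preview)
Your proposal is correct and is exactly the paper's approach: the paper states (just before the corollary) that $c_n t\,{\frak M}_t^{(3-n)/2} f = u$, so $\sup_{t>0}|u/t|$ is ${\frak M}^{(3-n)/2}f$ up to a constant, and then Theorem~\ref{th1.1}(i) forces $(3-n)/2 \geq -(n-1)/p$, which fails precisely when $p>2(n-1)/(n-3)$. The parenthetical about the first term is slightly garbled in wording but the conclusion (that it never obstructs since $1/p\le 1$) is right and in any case unnecessary, as it suffices that the second term of the maximum already exceeds $(3-n)/2$.
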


We would like to mention that for the range $\alpha$ in \eqref{e1.5},   it is commented in \cite[4.10, p.519]{St1}   that the optimal results for $p>2$ and $n\geq 2$ ``are still a mystery".
Our   Theorem~\ref{th1.1}   gives an affirmative answer  in dimension $n=2$  to
show sharpness of  $	{\rm Re}\,\alpha > \max\left\{{1/p}-{1/2},\ -{1/p}\right\}$  in the estimate \eqref{e1.3} except the borderline.

The proof of (ii) of Theorem~\ref{th1.1} can be shown   by applying    the work  of Guth-Wang-Zhang \cite{GWZ} on  local smoothing estimates   along with the techniques previously used
in \cite{MSS} and \cite{MYZ}.
The main contribution of this article is to show  (i) of Theorem~\ref{th1.1}.
From the asymptotic expansion of Fourier multiplier of the operator ${\frak M}^{\alpha}_t$,
 it is seen  that ${\frak M}_t^\alpha $ are essentially the sum of half-wave operators
$e^{it\sqrt{-\Delta}} $ and $e^{-it\sqrt{-\Delta}}$, and hence the complexity  of the operator ${\frak M}^{\alpha}_t$
 comes from  the interference between the  operators $e^{it\sqrt{-\Delta}} $ and $e^{-it\sqrt{-\Delta}}$. To
show  the necessity of $L^p$-boundedness of   ${\frak M}^{\alpha}_t$,  we make  the following   observations.     For the case  $p>2n/(n-1)$ we note  that by the stationary phase argument, two waves  $e^{ it\sqrt{-\Delta}}f$  and $e^{ -it\sqrt{-\Delta}}f$  concentrate  on the opposite parts of sphere $\{x\in\Rn:  |x|=t\}$, respectively, when $\hat f$ is supported on a small cone.
 For the case  $2\leq p\leq 2n/(n-1)$, we let $f$   be a wave packet of direction $\nu\in S^{n-1}$, then one can regard $e^{\pm it\sqrt{-\Delta}}f(x)$ as the translations $f(x\pm t\nu)$ of $f(x)$, which  concentrate  on the opposite parts of sphere $\{x\in\Rn:\ |x|=t\}$.    In Section 3, we
construct two examples  such that there is no interference between $e^{it\sqrt{-\Delta}}f$ and $e^{-it\sqrt{-\Delta}}f$ to   obtain
  the desired range of $\alpha$   in
(i) of Theorem~\ref{th1.1}.

The paper is organized as follows. In Section 2, we give some preliminary results including the properties of
the Fourier multiplier associated to the    spherical   operators $	{\frak M}^\alpha_t $ by using asymptotic expansions of Bessel functions.
\iffalse
We split the operator $	{\frak M}^\alpha_t $ into three terms, and  show  their $L^p$-boundness  of  two of the three  terms. Hence
 to study  $L^p$-boundness of
the operator $\frak M^\alpha_t $  it suffices to know whether or not the operator
$	{\mathscr A}_t $  is bounded on $L^p(\mathbb R^n)$,
which will be investigated  in Section 3.
\fi  The proof of (i) of Theorem~\ref{th1.1} will be given in  Section 3 by constructing two examples to obtain the necessarity of $L^p$-bounds for the maximal operator $	{\frak M}^\alpha$.
In Section 4 we will give the proof of (ii) of Theorem~\ref{th1.1}.
  In the Appendix, we clarify certain estimates for the Bessel function
   used in Section 2
for the  convenience of reader.

%$$$$$$$$$$$$$$$$$$$$$$$$$$$$$$$$$$$$$$$$$$$$$$$$$$$$$$$$$$$$$$$$$$$$$$$$$$$$$$$$$$$

\medskip

\section{Preliminary results}\label{s2}
\setcounter{equation}{0}

We begin with recalling the  spherical   function  $	{\frak M}^\alpha_t f(x)=f\ast m_{\alpha,t}(x)$
where $m_{\alpha,t}(x)= t^{-n}m_{\alpha}(t^{-1}x)$
and
$$
	m_{\alpha}(x)=\Gamma(\alpha)^{-1}\big(1-|x|^{2}\big)_{+}^{\alpha-1},
$$
where $\Gamma(\alpha)$ is the Gamma function and $(r)_+=\max\{0,r\}$ for $r\in \mathbb R$. Define the Fourier transform of $f$ by
$ 
\hat{f}(\xi)=\int_{\Rn} e^{2\pi ix \cdot\xi}f(x)\,\text{d}x.
$ 
It follows by    \cite[p.171]{SW}  that the Fourier transform of $m_\alpha$ is given by
\begin{align}\label{e2.1}
	\widehat{m_{\alpha}}(\xi)=\pi^{-\alpha+1}|\xi|^{-n/2-\alpha+1}
	J_{n/2+\alpha-1}\big(2\pi|\xi|\big).
\end{align}
Here $J_\beta$ denotes the Bessel function of order $\beta$.
%and $\hat{f}\,$ denotes the Fourier  transform of $f$.  
For ${\rm Re}\, \beta>-1/2$, we can obtain the complete
 asymptotic expansion
\begin{align}\label{ebb}
	J_\beta(r)\sim  r^{-1/2}e^{ir} \sum_{j= 0}^{\infty}b_{j} r^{-j} + r^{-1/2}e^{-ir} \sum_{j= 0}^{\infty}d_j r^{-j},\, \ \ \ \ \ \  r\geq 1
\end{align}
for suitable coefficients $b_j$ and $d_j$.
Note that when  $\beta$ is a positive integer,  \eqref{ebb} is given in \cite[(15), p.338]{St1}. For ${\rm Re}\, \beta>-1/2$, we refer it to the Appendix.
 For any given $N\geq 1$, it follows from \eqref{ebb} that
\begin{align}\label{e2.2}
	J_\beta(r)= r^{-1/2}e^{ir}\left(\sum_{j= 0}^{N-1}b_{j} r^{-j}+E_{N,1}(r)\right)+ r^{-1/2}e^{-ir}\left(\sum_{j= 0}^{N-1}d_j r^{-j}+ E_{N,2}(r)\right)+E(r)
\end{align}
for $r\geq 1$. Here  $b_0,d_0\neq 0$,  and the error terms satisfy

\begin{align}\label{e2.3}
	\bigg|\bigg(\frac{d}{dr}\bigg)^k E_{N,1}(r)\bigg|+\bigg|\bigg(\frac{d}{dr}\bigg)^k E_{N,2}(r)\bigg|+\bigg|\bigg(\frac{d}{dr}\bigg)^k E(r)\bigg|\leq C_{k}r^{-N-k},
\end{align}
for all $k\in \Z_+$.
We rewrite  \eqref{e2.1} as
\begin{align}\label{e2.4}
	\widehat{m_\alpha}(\xi)
	&= \varphi(|\xi|)\widehat{m_\alpha}(\xi)
	 + (1-\varphi(|\xi|))\widehat{m_\alpha}(\xi)   \notag\\
	&= \left[ \varphi(|\xi|)\widehat{m_\alpha}(\xi)+{\mathcal E}(|\xi|) \right ] \notag\\
	&+ \left[e^{2\pi i|\xi|}{\mathcal E}_{N,1}(|\xi|)+e^{-2\pi i|\xi|}{\mathcal E}_{N,2}(|\xi|) \right]\notag\notag\\
	& + |\xi|^{-(n-1)/2-\alpha}\left[e^{2\pi i|\xi|}a_1(|\xi|)+e^{-2\pi i|\xi|}a_2(|\xi|) \right],
\end{align}
where
\begin{align}\label{ee2.1}
&{\mathcal E}(r) = (2\pi)^{1/2}c(\pi, \alpha)(1-\varphi(r))r^{-(n-2)/2-\alpha}E(2\pi r),\nonumber\\
&{\mathcal E}_{N,\ell}(r) = c(\pi, \alpha) E_{N,\ell}(2\pi r)(1-\varphi(r))r^{-(n-1)/2-\alpha},\ \ \ \ell=1,2, \nonumber\\
&a_1(r) =  c(\pi, \alpha) \sum_{j= 0}^{N-1}b_j (2\pi r)^{-j}(1- \varphi(r)),\\
 &a_2(r) =   c(\pi, \alpha)\sum_{j= 0}^{N-1}d_j (2\pi r)^{-j}(1- \varphi(r))\nonumber
\end{align}
with $c(\pi, \alpha)= 2^{-1/2}\pi^{-\alpha+1/2}$.
Here  $\varphi\in C_0^{\infty}(\R)$ is   an even function, identically equals $1$ on
$B(0,M)$  and supported on $B(0,2M)$, where $M=M(N)$ is large enough
such that $|a_2(r)| \geq c_{low}>0 $ for $|r|\geq M$.
 Then we can split the Fourier multiplier of the operator $\frak M^\alpha_1 $ into three parts as in \eqref{e2.4} above.
 Firstly, we note that
  $\varphi(|\xi|)\widehat{m_\alpha}(\xi)$ is smooth and compactly supported and ${\mathcal E}(|\xi|)\in {\mathscr S}(\Rn)$. It is seen that  $\sup_{t>0} |\widehat{m_\alpha}(tD) \varphi(t|D|)f|$ and $\sup_{t>0} |{\mathcal E}(t|D|)f|$ are bounded by the Hardy-Littlewood maximal function. Hence,
	\begin{eqnarray}\label{e2.5} \hspace{1cm}
		\big\|\sup_{t>0} |\widehat{m_\alpha}(tD) \varphi(t|D|))f|\big \|_{L^p(\R^n)}+ \big\|\sup_{t>0} |{\mathcal E}(t|D|)f |\big \|_{L^p(\R^n)} \leq C\|f\|_{L^p(\Rn)}, \ \ \ \  p>1.
	\end{eqnarray}
Secondly,  we define
$$
{\mathscr E}_{N}f(x,t)=\int_{\Rn} e^{2\pi i(x\cdot \xi+t|\xi|)}{\mathcal E}_{N,1}\big(t|\xi|\big) \hat{f}(\xi)\,\text{d}\xi+\int_{\Rn} e^{2\pi i(x\cdot \xi-t|\xi|)}{\mathcal E}_{N,2}\big(t|\xi|\big)  \hat{f}(\xi)\,\text{d}\xi.
$$
Then   we have the following lemma.
\begin{lemma}\label{le2.1}
	Let $p\ge 2$.  There exists a constant $C>0$ such that
	\begin{eqnarray}\label{e2.6}
		\big\|\sup_{t\in [1,2]}|{\mathscr E}_{N}f(\cdot,t)| \big\|_{L^p(\Rn)}\leq C  \|f\|_{L^p(\Rn)},
	\end{eqnarray}
	when
	$$
	N>-{n-2\over p}-{\rm Re}\,\alpha.
	$$
\end{lemma}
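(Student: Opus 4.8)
The plan is to decompose $\mathscr{E}_N$ dyadically in frequency, to estimate each resulting piece at a fixed time $t\in[1,2]$ by the classical $L^p$ bound for frequency-localized half-wave operators, then to upgrade the fixed-time bounds to the maximal bound \eqref{e2.6} by a one-dimensional Sobolev embedding in $t$, and finally to sum a geometric series in the dyadic parameter; the summability threshold will be exactly $N>-(n-2)/p-{\rm Re}\,\alpha$.

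First I would perform a dyadic decomposition in frequency. Since $\mathcal{E}_{N,\ell}(t|\xi|)$ is supported in $\{t|\xi|\ge M\}$ and $t\in[1,2]$, I fix a Littlewood--Paley function $\psi\in C_0^\infty(\Rn)$ supported in $\{|\xi|\sim1\}$ and write $\mathscr{E}_N f=\sum_{j\ge j_0}\mathscr{E}_N^j f$, where $\mathscr{E}_N^j f(\cdot,t)$ is obtained by inserting the cutoff $\psi(2^{-j}\xi)$ into the two integrals defining $\mathscr{E}_N f(\cdot,t)$ and $j_0$ is a suitable integer. Put $\beta_0:=N+\tfrac{n-1}{2}+{\rm Re}\,\alpha$. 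Estimate \eqref{e2.3} shows that, for each $t\in[1,2]$, the functions $\xi\mapsto\mathcal{E}_{N,\ell}(t|\xi|)\psi(2^{-j}\xi)$ are symbols supported in $\{|\xi|\sim2^j\}$ obeying Mikhlin-type bounds of order $-\beta_0$ with constants uniform in $j$ and $t$, while $\partial_t\big[e^{\mp2\pi it|\xi|}\mathcal{E}_{N,\ell}(t|\xi|)\psi(2^{-j}\xi)\big]$ equals $e^{\mp2\pi it|\xi|}$ times such a symbol of order $1-\beta_0$, the extra factor $|\xi|\sim2^j$ arising from differentiating the phase. Viewing $\mathscr{E}_N^j f(\cdot,t)$ as a Mikhlin multiplier of $L^p$ operator norm $\lesssim 2^{-j\beta_0}$ composed with the half-wave operators $e^{\pm2\pi it\sqrt{-\Delta}}$ restricted to frequencies $|\xi|\sim2^j$, the classical fixed-time estimate $\|e^{\pm it\sqrt{-\Delta}}g\|_{L^p(\Rn)}\lesssim2^{j(n-1)|1/2-1/p|}\|g\|_{L^p(\Rn)}$ for $g$ with $\widehat g$ supported in $\{|\xi|\sim2^j\}$ (uniformly for $t$ in compact subsets of $(0,\infty)$) then gives, for $p\ge2$ and uniformly in $t\in[1,2]$,
\[
\big\|\mathscr{E}_N^j f(\cdot,t)\big\|_{L^p(\Rn)}\lesssim
2^{-j(N+{\rm Re}\,\alpha+\frac{n-1}{p})}\|f\|_{L^p(\Rn)},\qquad
\big\|\partial_t\mathscr{E}_N^j f(\cdot,t)\big\|_{L^p(\Rn)}\lesssim
2^{-j(N+{\rm Re}\,\alpha+\frac{n-1}{p}-1)}\|f\|_{L^p(\Rn)}.
\]

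Next I would pass from fixed time to the maximal function over $t\in[1,2]$ using the elementary pointwise inequality $\sup_{t\in[1,2]}|G(x,t)|^p\le\int_1^2|G(x,t)|^p\,\text{d}t+p\int_1^2|G(x,t)|^{p-1}|\partial_tG(x,t)|\,\text{d}t$, which after integration in $x$ and H\"older's inequality yields
\[
\Big\|\sup_{t\in[1,2]}|G(\cdot,t)|\Big\|_{L^p(\Rn)}^p\lesssim
\|G\|_{L^p(\Rn\times[1,2])}^p+\|G\|_{L^p(\Rn\times[1,2])}^{p-1}\,
\|\partial_tG\|_{L^p(\Rn\times[1,2])}.
\]
Applying this with $G=\mathscr{E}_N^j f$ and inserting the two displayed bounds gives $\big\|\sup_{t\in[1,2]}|\mathscr{E}_N^j f(\cdot,t)|\big\|_{L^p(\Rn)}\lesssim 2^{-j(N+{\rm Re}\,\alpha+\frac{n-2}{p})}\|f\|_{L^p(\Rn)}$. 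Since $\sup_{t\in[1,2]}|\mathscr{E}_N f(\cdot,t)|\le\sum_{j\ge j_0}\sup_{t\in[1,2]}|\mathscr{E}_N^j f(\cdot,t)|$, the triangle inequality in $L^p$ together with summation of the geometric series produces \eqref{e2.6} precisely when $N+{\rm Re}\,\alpha+\tfrac{n-2}{p}>0$, i.e. when $N>-\tfrac{n-2}{p}-{\rm Re}\,\alpha$, which is the hypothesis.

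The one non-elementary ingredient is the fixed-time $L^p$ bound for frequency-localized half-wave operators, and the step I expect to require the most care is the verification, from \eqref{e2.3}, that the accompanying multiplier $\mathcal{E}_{N,\ell}(t|\xi|)\psi(2^{-j}\xi)$ satisfies Mikhlin bounds of the stated orders with constants independent of $j$ and of $t\in[1,2]$, together with the fact that the half-wave estimate holds with bounds uniform in $t\in[1,2]$. I note finally that no local smoothing is needed here: since $\mathcal{E}_{N,\ell}$ is a genuine error term carrying the decay parameter $N$ at our disposal, the crude fixed-time bound already suffices.
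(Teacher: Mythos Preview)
Your proposal is correct and follows essentially the same route as the paper: a dyadic frequency decomposition, fixed-time $L^p$ bounds for the frequency-localized pieces via the half-wave/FIO estimate, a Sobolev-type embedding in $t$ to pass to the supremum, and summation of the resulting geometric series, with the same threshold $N>-(n-2)/p-{\rm Re}\,\alpha$. The only cosmetic differences are that the paper localizes via $\psi_j(t|\xi|)$ rather than $\psi(2^{-j}\xi)$, cites the FIO bound from \cite[Chapter~IX]{St1} in place of your Mikhlin-plus-half-wave factorization, and invokes Lemma~\ref{le2.2} rather than your equivalent pointwise inequality for $\sup_t|G|^p$.
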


The proof of Lemma~\ref{le2.1} is based on the following elementary result (see \cite[Lemma 2.4.2]{S1}).

\begin{lemma}\label{le2.2}
Let $F$ be a smooth function defined on  ${\mathbb R^n}\times [1,2]$.  Then  for  $p>1$ and $1/p +1/p' =1,$
	$$
	\big\|\sup_{1\leq t\leq 2} |F(\cdot,t)|\big\|_{L^p(\Rn)}\leq C_p\left( \|F(\cdot,1)\|_{L^p(\Rn)} + \|F\|_{L^p(\Rn\times  [1,2])}^{1-1/p} \left\|\partial_t F\right\|_{L^p(\Rn\times  [1,2])}^{1/p}\right).
	$$
\end{lemma}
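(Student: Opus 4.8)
The statement is a quantitative form of the one–dimensional Sobolev embedding $W^{1,p}([1,2])\hookrightarrow L^\infty([1,2])$, applied in the $t$ variable with explicit dependence on the two norms, and the plan is the obvious one: reduce the supremum over $t$ to an integral by the fundamental theorem of calculus, and then split the resulting double integral by H\"older's inequality in the variables $(x,t)$.

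First I would fix $x\in\Rn$ and set $g_x(t)=|F(x,t)|$ for $t\in[1,2]$. Since $F$ is smooth and $[1,2]$ is compact, $g_x$ is Lipschitz on $[1,2]$, with $|g_x(t)-g_x(s)|\le |F(x,t)-F(x,s)|\le \big(\sup_{[1,2]}|\partial_t F(x,\cdot)|\big)\,|t-s|$; hence for $p>1$ the function $g_x^{\,p}$ is also Lipschitz, absolutely continuous, and satisfies $(g_x^{\,p})'=p\,g_x^{\,p-1}g_x'$ with $|g_x'|\le|\partial_t F(x,\cdot)|$ almost everywhere. By the fundamental theorem of calculus, for every $t\in[1,2]$,
$$
|F(x,t)|^{p}=|F(x,1)|^{p}+\int_{1}^{t}(g_x^{\,p})'(s)\,ds\le |F(x,1)|^{p}+p\int_{1}^{2}|F(x,s)|^{p-1}\,|\partial_s F(x,s)|\,ds,
$$
and, taking the supremum over $t$,
$$
\sup_{1\le t\le 2}|F(x,t)|^{p}\le |F(x,1)|^{p}+p\int_{1}^{2}|F(x,s)|^{p-1}\,|\partial_s F(x,s)|\,ds.
$$

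Next I would integrate this in $x\in\Rn$ and bound the double integral by H\"older's inequality in $(x,s)\in\Rn\times[1,2]$ with the conjugate exponents $p'=p/(p-1)$ and $p$; since $(p-1)p'=p$ this gives
$$
\iint_{\Rn\times[1,2]}|F|^{p-1}|\partial_s F|\le \Big(\iint_{\Rn\times[1,2]}|F|^{p}\Big)^{1-1/p}\Big(\iint_{\Rn\times[1,2]}|\partial_s F|^{p}\Big)^{1/p}=\|F\|_{L^p(\Rn\times[1,2])}^{p-1}\,\|\partial_t F\|_{L^p(\Rn\times[1,2])}.
$$
Combining the two estimates yields
$$
\big\|\sup_{1\le t\le 2}|F(\cdot,t)|\big\|_{L^p(\Rn)}^{p}\le \|F(\cdot,1)\|_{L^p(\Rn)}^{p}+p\,\|F\|_{L^p(\Rn\times[1,2])}^{p-1}\,\|\partial_t F\|_{L^p(\Rn\times[1,2])},
$$
and taking $p$-th roots and using $(a+b)^{1/p}\le a^{1/p}+b^{1/p}$ for $a,b\ge 0$ gives the desired inequality with $C_p=\max\{1,p^{1/p}\}$.

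The only step needing a little care is the first one: when $1<p<2$ the function $t\mapsto|F(x,t)|^{p}$ fails to be $C^1$ at the zeros of $F(x,\cdot)$, so I would justify the fundamental theorem of calculus through the Lipschitz/absolute-continuity argument above (equivalently, one may replace $|F|$ by $(|F|^{2}+\varepsilon^{2})^{1/2}-\varepsilon$, run the computation, and let $\varepsilon\to 0$). Apart from that there is no real obstacle; the identity $(p-1)p'=p$ is exactly what produces the exponents $1-1/p$ and $1/p$ on the right–hand side, and the constant $C_p$ is harmless since $p$ ranges over a bounded set in all applications.
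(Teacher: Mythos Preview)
Your proof is correct and is exactly the standard argument; the paper does not give its own proof of this lemma but simply cites \cite[Lemma~2.4.2]{S1}, and what you have written is precisely that proof. The care you take with absolute continuity of $|F(x,\cdot)|^{p}$ for $1<p<2$ is appropriate and the constant $C_p=p^{1/p}$ you obtain is fine.
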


\smallskip

\begin{proof}[Proof of Lemma~\ref{le2.1}]
We fix a function $\varphi$ as in \eqref{e2.4}. For $j\geq 1$, we let
 $ \psi_j(r)=\varphi(2^{-j} r)-\varphi(2^{-j+1} r)$  such  that $
  1\equiv  \varphi(r) +  \sum_{j\geq 1 }\psi_j(r), r\geq0.
  $
	For $j\geq1$, define
	$$
	{\mathscr E}_{N,j}f(x,t)=\int_{\mathbb R^n}  \left(e^{2\pi i(x\cdot \xi+t|\xi|)}{\mathcal E}_{N,1}\big(t|\xi|\big) + e^{2\pi i(x\cdot \xi-t|\xi|)}{\mathcal E}_{N,2}\big(t|\xi|\big)\right)  {\psi_j}(t|\xi|)\hat{f}(\xi)d\xi.
	$$
	To prove  \eqref{e2.6}, it suffices to show that there exists a constant  $\delta>0$ such that
	for all $j\geq 1$,
	\begin{eqnarray}\label{e2.7}
		\big\|\sup_{1\leq t\leq 2}|{\mathscr E}_{N,j}f(\cdot,t)| \big\|_{L^p(\Rn)}\leq C 2^{-\delta j}\|f\|_{L^p(\Rn)}.
	\end{eqnarray}

	Let us prove \eqref{e2.7} by using Lemma \ref{le2.2}.  First, for each fixed $t\in[1,2]$, ${\mathscr E}_{N,j}f$ are the sum of two Fourier integral operators of order $-(n-1)/2-{\rm Re}\, \alpha-N$ with phase $x\cdot\xi\pm t|\xi|$.
	By \cite[Theorem 2, Chapter IX]{St1}  and the fact that $e^{it\sqrt{-\Delta}}$ is local at scale $t$, we have
	\begin{align}\label{e2.8}
	  \sup_{1\leq t\leq2}\big\|	 {\mathscr E}_{N,j}f(\cdot,t)\big\|_{L^p(\Rn)}\leq C 2^{-((n-1)/2+{\rm Re}\,\alpha+N) j}2^{(n-1)(1/2-1/p)j}\|f\|_{L^p(\Rn)},
	\end{align}
	see also \cite[Corollary 2.4]{SSS}. Next, we write 	 $\partial_{t}{\mathscr E}_{N,j}f(x,t)$ as the sum of following terms,
	\begin{eqnarray*}
		&&	\pm2\pi it^{-1}\int e^{2\pi i(x\cdot \xi\pm t|\xi|)}t|\xi|{\mathcal E}_{N,1}\big(t|\xi|\big) {\psi_j}(t|\xi|)\hat{f}(\xi)d\xi;\\
		&&
			 	\pm2\pi it^{-1}\int e^{2\pi i(x\cdot \xi\pm t|\xi|)}t|\xi|{\mathcal E}_{N,2}\big(t|\xi|\big) {\psi_j}(t|\xi|)\hat{f}(\xi)d\xi;\\
		&&
		t^{-1}\int e^{2\pi i(x\cdot \xi\pm t|\xi|)}t|\xi|({\mathcal E}_{N, 1}\psi_{j})'\big(t|\xi|\big) \hat{f}(\xi)d\xi;\\
		&&
		t^{-1}\int e^{2\pi i(x\cdot \xi\pm t|\xi|)}t|\xi|({\mathcal E}_{N, 2}\psi_{j})'\big(t|\xi|\big) \hat{f}(\xi)d\xi.
	\end{eqnarray*}
	By \eqref{e2.3}, we see that for each fixed $t\in[1,2]$, they are Fourier integal operators of order no more than $-(n-1)/2-{\rm Re}\, \alpha-N+1$.
	By \cite[Theorem 2, Chapter IX]{St1} again,
	\begin{align}\label{e2.9}
		\sup_{1\leq t\leq2}\big\|\partial_t{\mathscr E}_{N,j}f(\cdot,t) \big\|_{L^p(\Rn)}\leq C 2^{-((n-1)/2+{\rm Re}\,\alpha+N-1) j}2^{(n-1)(1/2-1/p)j}\|f\|_{L^p(\Rn)}.
	\end{align}
Lemma \ref{le2.2}, together with \eqref{e2.8} and  \eqref{e2.9},  gives
	$$
	\big\|\sup_{1\leq t\leq 2}|{\mathscr E}_{N,j}f(\cdot,t)| \big\|_{L^p(\Rn)}\leq C 2^{-((n-1)/2+{\rm Re}\,\alpha+N-1/p) j}2^{(n-1)(1/2-1/p)j}\|f\|_{L^p(\Rn)}.
	$$
	Choosing $N>-(n-2)/p-{\rm Re}\,\alpha$ and letting    $\delta= N+(n-2)/p+{\rm Re}\,\alpha$, we  obain  estimate \eqref{e2.7}. The  proof of Lemma~\ref{le2.1} is complete.
\end{proof}

Finally, we define
\begin{align}\label{e2.10}
	{\mathscr A}_t f(x)= \int_{\Rn} \left( e^{2\pi i(x\cdot\xi+ t|\xi|)}a_1(t|\xi|) +  e^{2\pi i(x\cdot\xi- t|\xi|)}a_2(t|\xi|) \right) \hat{f}(\xi) \,\text{d}\xi.
\end{align}
From \eqref{e2.4}, \eqref{e2.5} and Lemma~\ref{le2.2}, we see that the $L^p$-boundness of
the operator $\frak M^\alpha_t $  reduces to  boundedness of the operator
$	{\mathscr A}_t $ on Sobolev spaces,
which will be investigated  in Section 3 below.

\medskip

\section{Proof of  {\rm (i)} of   Theorem~\ref{th1.1}}
\setcounter{equation}{0}

%In this section, we will show the necessary condition ${\rm Re}\,\alpha>\max\{1/p-(n-1)/2,-(n-1)/p\}$. %We first show ${\rm Re}\,\alpha\geq -(n-1)/p$ for $p>2n/(n-1)$. Clearly speaking, we prove

To prove  {\rm (i)} of   Theorem~\ref{th1.1}, we need to show the following proposition.

\begin{proposition}\label{prop3.1}
	Let $n\geq 2$ and $p>2n/(n-1)$. Suppose
	\begin{align}\label{e3.1}
		\big\| {\frak M}_1^\alpha f\big\|_{L^p(\Rn)}\leq C\|f\|_{L^p(\Rn)}
	\end{align}
	holds for some $\alpha\in \mathbb{C}$. Then, we have
	$${\rm Re}\,\alpha\geq -{n-1\over p}.
	$$
\end{proposition}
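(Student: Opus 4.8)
The plan is to argue by contradiction: I will construct, for each $\alpha$ with ${\rm Re}\,\alpha<-(n-1)/p$, a family $\{f_\lambda\}_{\lambda\ge 1}\subset{\mathscr S}(\Rn)$ with
\[
\frac{\|{\frak M}_1^\alpha f_\lambda\|_{L^p(\Rn)}}{\|f_\lambda\|_{L^p(\Rn)}}\ \gtrsim\ \lambda^{-{\rm Re}\,\alpha-(n-1)/p}\ \longrightarrow\ \infty ,
\]
contradicting \eqref{e3.1}; since only the fixed-time operator ${\frak M}_1^\alpha$ is at stake, no supremum in $t$ enters. The test function is a \emph{converging spherical wave at frequency $\lambda$}: pick $\phi\in C_c^\infty((1,2))$ with $\int_0^\infty s^{(n-1)/2-\alpha}\phi(s)\,ds\neq 0$ (possible, being a nonzero linear functional of $\phi$) and define $f_\lambda$ by $\widehat{f_\lambda}(\xi)=e^{-2\pi i|\xi|}\phi(|\xi|/\lambda)$. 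Then $\widehat{f_\lambda}$ is smooth and supported in $\{\lambda\le|\xi|\le 2\lambda\}$, so $f_\lambda\in{\mathscr S}(\Rn)$. The mechanism is the ``no interference'' idea: in \eqref{e2.4} the leading part of $\widehat{m_\alpha}$ is $|\xi|^{-(n-1)/2-\alpha}\bigl(a_1(|\xi|)e^{2\pi i|\xi|}+a_2(|\xi|)e^{-2\pi i|\xi|}\bigr)$, and when it hits $\widehat{f_\lambda}$ the oscillation $e^{2\pi i|\xi|}$ in the first summand is cancelled by the $e^{-2\pi i|\xi|}$ of $\widehat{f_\lambda}$, so that half of ${\frak M}_1^\alpha f_\lambda$ becomes the inverse Fourier transform of $|\xi|^{-(n-1)/2-\alpha}a_1(|\xi|)\phi(|\xi|/\lambda)$, a radial ``spike'' of height $\sim\lambda^{(n+1)/2-{\rm Re}\,\alpha}$ on $\{|x|\lesssim\lambda^{-1}\}$; the second summand instead acquires $e^{-4\pi i|\xi|}$ and is concentrated near $\{|x|=2\}$, disjoint from the spike. (Equivalently one may localise $\widehat{f_\lambda}$ to a fixed narrow cone about $\omega_0$: then $f_\lambda$ lives on a cap of $\{|x|=1\}$ near $-\omega_0$, the $a_1$-half of ${\frak M}_1^\alpha f_\lambda$ on a cap near the origin and the $a_2$-half on a cap of $\{|x|=2\}$ near $-\omega_0$ — the ``opposite parts of the sphere'' picture of the introduction — with identical numerology.)

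Two estimates are needed. \emph{(i) Size of $f_\lambda$}: in polar coordinates $f_\lambda(x)=\int_0^\infty e^{-2\pi ir}\phi(r/\lambda)r^{n-1}\widehat{d\sigma}(rx)\,dr$, where $\widehat{d\sigma}(\eta)=\int_{{\mathbb S}^{n-1}}e^{-2\pi i\eta\cdot\omega}\,d\omega$; inserting the asymptotics $\widehat{d\sigma}(\eta)=|\eta|^{-(n-1)/2}\bigl(c_+e^{2\pi i|\eta|}+c_-e^{-2\pi i|\eta|}\bigr)+O(|\eta|^{-(n+1)/2})$ for $|\eta|\ge1$ and $\widehat{d\sigma}(\eta)=\sigma({\mathbb S}^{n-1})+O(|\eta|^2)$ near $0$, the $r$-phase is $r(|x|\mp1)$, stationary only for $\big||x|-1\big|\lesssim\lambda^{-1}$; routine integration by parts then gives $|f_\lambda(x)|\lesssim\lambda^{(n+1)/2}\bigl(1+\lambda\big||x|-1\big|\bigr)^{-K}$ for every $K$ (and $|f_\lambda|\lesssim\lambda^{n-K}$ on $\{|x|\lesssim\lambda^{-1}\}$, thanks to the oscillation $e^{-2\pi ir}$), whence $\|f_\lambda\|_{L^p(\Rn)}\lesssim\lambda^{(n+1)/2-1/p}$. \emph{(ii) Lower bound near the origin}: decompose $\widehat{m_\alpha}$ by \eqref{e2.4}; since $\lambda>2M$ the term $\varphi(|\xi|)\widehat{m_\alpha}$ never meets $\widehat{f_\lambda}$, and by \eqref{e2.3} the ${\mathcal E}$, ${\mathcal E}_{N,1}$, ${\mathcal E}_{N,2}$ terms contribute $O(\lambda^{(n+2)/2-{\rm Re}\,\alpha-N})$ in $L^\infty$, negligible for $N$ large. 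The $a_1$-summand contributes $\int_0^\infty r^{(n-1)/2-\alpha}a_1(r)\phi(r/\lambda)\widehat{d\sigma}(rx)\,dr$, which on $\{|x|\le c_0\lambda^{-1}\}$ (small $c_0$) equals $\sigma({\mathbb S}^{n-1})c(\pi,\alpha)b_0\bigl(\int s^{(n-1)/2-\alpha}\phi(s)\,ds\bigr)\lambda^{(n+1)/2-\alpha}+O\bigl((c_0^2+\lambda^{-1})\lambda^{(n+1)/2-{\rm Re}\,\alpha}\bigr)$, using $\widehat{d\sigma}(rx)=\sigma({\mathbb S}^{n-1})+O(c_0^2)$ there and $a_1(\lambda s)\to c(\pi,\alpha)b_0\neq0$ uniformly on ${\rm supp}\,\phi$; the $a_2$-summand contributes $\lambda^{(n+1)/2-\alpha}$ times the integral of a fixed $C_c^\infty$ amplitude against $e^{-4\pi i\lambda s}$, which integration by parts makes $O(\lambda^{-K})$. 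Hence $|{\frak M}_1^\alpha f_\lambda(x)|\gtrsim\lambda^{(n+1)/2-{\rm Re}\,\alpha}$ on $\{|x|\le c_0\lambda^{-1}\}$, so $\|{\frak M}_1^\alpha f_\lambda\|_{L^p(\Rn)}\gtrsim\lambda^{(n+1)/2-{\rm Re}\,\alpha}(\lambda^{-n})^{1/p}=\lambda^{(n+1)/2-{\rm Re}\,\alpha-n/p}$.

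Dividing the two bounds, $\|{\frak M}_1^\alpha f_\lambda\|_p/\|f_\lambda\|_p\gtrsim\lambda^{-{\rm Re}\,\alpha-(n-1)/p}$, and letting $\lambda\to\infty$ under \eqref{e3.1} forces $(n+1)/2-{\rm Re}\,\alpha-n/p\le(n+1)/2-1/p$, i.e. ${\rm Re}\,\alpha\ge-(n-1)/p$. The oscillatory-integral bookkeeping in (i) and (ii) is standard (non-stationary phase and stationary phase); I expect the only point genuinely requiring care to be the verification in (ii) that the $a_2$-half together with all lower-order terms is $o(\lambda^{(n+1)/2-{\rm Re}\,\alpha})$ on the spike $\{|x|\lesssim\lambda^{-1}\}$ — that is, that the two half-waves truly do not interfere there — so that the lower bound for the $a_1$-half cannot be defeated by cancellation; this is exactly what the design of $f_\lambda$ (the factor $e^{-2\pi i|\xi|}$) is built to guarantee.
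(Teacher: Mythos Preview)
Your argument is correct and yields the proposition, but it follows a genuinely different route from the paper's own proof. The paper reduces Proposition~\ref{prop3.1} to the Sobolev estimate of Lemma~\ref{le3.2} for the operator ${\mathscr A}_1$ and disproves the latter by testing against an \emph{angularly} localised, \emph{singular} function $\hat f_\beta(\xi)=w(|\xi|)\chi(\xi)\widehat{\gamma}_\beta(\xi)$ (Bessel potential cut to a narrow cone about $v_1$); using the stationary-phase expansion \eqref{e3.4}--\eqref{e3.5} for $\widehat{\chi\,\text{d}\sigma}$, it shows that on the set $\{|x/|x|-v_1|\le 10^{-2}\}$ the $a_1$-half of ${\mathscr A}_1 f_\beta$ stays $O(1)$ while the $a_2$-half is $\sim\big||x|-1\big|^{\beta-(n+1)/2}$, giving ${\mathscr A}_1 f_\beta\notin L^p$ for suitable $\beta$. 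Your construction instead uses a \emph{radial}, \emph{frequency-localised} focusing wave $\widehat{f_\lambda}(\xi)=e^{-2\pi i|\xi|}\phi(|\xi|/\lambda)$, so that the phase cancellation puts the $a_1$-half onto a spike of height $\lambda^{(n+1)/2-{\rm Re}\,\alpha}$ at the origin while the $a_2$-half (carrying $e^{-4\pi i|\xi|}$) lands near $\{|x|=2\}$. Both arguments realise the same ``no interference'' heuristic from the introduction; yours is more elementary for this particular statement (no angular cutoff, no asymptotics of $\widehat{\chi\,\text{d}\sigma}$, no system \eqref{e3.10} to solve, and it already detects the necessary condition on radial inputs), whereas the paper's route gives the stronger, two-sided Lemma~\ref{le3.2} valid for all $p>1$ and recovers the complementary range via duality.

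One minor point you should make explicit: your use of the decomposition \eqref{e2.4} (and hence of the coefficients $a_1,a_2$ and the error bounds \eqref{e2.3}) relies on the Bessel asymptotics \eqref{ebb}, which the paper establishes only for ${\rm Re}\,\beta>-1/2$, i.e.\ ${\rm Re}\,\alpha>(1-n)/2$. Thus your counterexample, as written, treats only $\alpha$ with $-(n-1)/2<{\rm Re}\,\alpha<-(n-1)/p$. The extension to all ${\rm Re}\,\alpha<-(n-1)/p$ is immediate by Stein interpolation of the analytic family $\alpha\mapsto{\frak M}_1^\alpha$ with the trivial endpoint ${\rm Re}\,\alpha=1$ (exactly as the paper notes in the paragraph preceding Lemma~\ref{le3.2}); you should add one sentence to this effect.
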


 Let us prove Proposition~\ref{prop3.1}. By interpolation with ${\rm Re}\,\alpha=1$, it suffices to show that \eqref{e3.1} can not hold if ${\rm Re}\,\alpha\in (-(n-1)/2, -(n-1)/p )$. Here ${\rm Re}\,\alpha>-{(n-1)/2}$ is to make sure that we can use \eqref{e2.2}.
By \eqref{e2.5} and Lemma~\ref{le2.1}, if $N$ in Lemma~\ref{le2.1}  is chosen large enough,  then  the proof of Proposition~\ref{prop3.1} reduces to  the following lemma.

\begin{lemma}\label{le3.2}
	Let $n\geq 2$ and $1<p<\infty$. Suppose
	\begin{align}\label{e3.2}
		\|{\mathscr A}_1f \|_{L^p(\Rn)}\leq C\|f\|_{W^{s,p}(\Rn)}
	\end{align}
	holds for some $s\in\R$. Then, we have $$s\geq (n-1)\left|\frac12-\frac1p\right|.$$
\end{lemma}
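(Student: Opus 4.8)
The plan is to prove that $s\ge (n-1)|1/2-1/p|$ is necessary by testing \eqref{e3.2} against frequency–localized wave packets $f_\lambda$ with $\widehat{f_\lambda}$ supported where $|\xi|\sim\lambda$ and letting $\lambda\to\infty$, in the spirit of the classical necessity examples for the fixed–time half–wave operators $e^{\pm i\sqrt{-\Delta}}$. First I would record, from \eqref{ee2.1} and \eqref{e2.10}, that $\mathscr A_1$ is the Fourier multiplier operator with symbol $m(\xi)=e^{2\pi i|\xi|}a_1(|\xi|)+e^{-2\pi i|\xi|}a_2(|\xi|)$, where $a_1,a_2$ are classical symbols of order $0$ supported in $\{|\xi|\ge M\}$ with $a_1(r)\to c(\pi,\alpha)b_0$ and $a_2(r)\to c(\pi,\alpha)d_0$ as $r\to\infty$; since $b_0,d_0,c(\pi,\alpha)$ are all nonzero we may (enlarging $M$) assume $|a_1(r)|,|a_2(r)|\ge c_0>0$ for $r\ge M$. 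Fix a non-negative $\chi\in C_c^\infty(\Rn)$ with $\int\chi\ne 0$, supported in the annulus $\{1/2<|\xi|<2\}$ (to be shrunk to a narrow cone when $p<2$), a large $\lambda$, and a modulation $\theta\in\{0,\pm 2\pi\}$, and set $\widehat{f_\lambda}(\xi)=e^{i\theta|\xi|}\chi(\xi/\lambda)$. Because $\widehat{f_\lambda}$ lives in $\{|\xi|\sim\lambda\}$, the Mikhlin multiplier theorem gives $\|f_\lambda\|_{W^{s,p}}\le C\lambda^{s}\|f_\lambda\|_{p}$ with $C$ independent of $\lambda$, so \eqref{e3.2} forces $\|\mathscr A_1 f_\lambda\|_p\le C\lambda^{s}\|f_\lambda\|_p$ for all $\lambda\ge1$; it then suffices to produce, for each $p$, a choice of $\theta$ and $\chi$ with $\|\mathscr A_1 f_\lambda\|_p/\|f_\lambda\|_p\gtrsim\lambda^{(n-1)|1/2-1/p|}$.

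Here $\mathscr A_1 f_\lambda$ is, up to an irrelevant reflection, the inverse transform of $m(\xi)\widehat{f_\lambda}(\xi)=e^{i(\theta+2\pi)|\xi|}a_1(|\xi|)\chi(\xi/\lambda)+e^{i(\theta-2\pi)|\xi|}a_2(|\xi|)\chi(\xi/\lambda)=:\widehat{G_1}+\widehat{G_2}$, and a suitable $\theta$ makes one of these symbols non-oscillatory. \textbf{Case $p\ge 2$:} take $\theta=2\pi$ and $\chi$ on the full annulus. Then $G_2$ has symbol $a_2(|\xi|)\chi(\xi/\lambda)$, a bump at scale $\lambda$ whose integral tends to $c(\pi,\alpha)d_0\int\chi\ne0$, so $|G_2|\gtrsim\lambda^{n}$ on $\{|x|\lesssim\lambda^{-1}\}$; meanwhile $G_1$ has symbol $e^{4\pi i|\xi|}a_1(|\xi|)\chi(\xi/\lambda)$, whose phase is linear in $x$ with $\xi$–gradient of modulus $\ge 2-|x|\ge 1/2$ on $\{|x|\le 3/2\}$, so non-stationary phase gives $G_1(x)=O_N(\lambda^{-N})$ there. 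Hence $\|\mathscr A_1 f_\lambda\|_p\gtrsim\lambda^{n}|\{|x|\lesssim\lambda^{-1}\}|^{1/p}\sim\lambda^{n/p'}$. On the other hand $f_\lambda$ is the inverse transform of $e^{2\pi i|\xi|}\chi(\xi/\lambda)$, which by stationary phase (equivalently, via \eqref{e2.2}–\eqref{e2.3}) concentrates on the shell $\{||x|-1|\lesssim\lambda^{-1}\}$ with size $\sim\lambda^{(n+1)/2}$, so $\|f_\lambda\|_p\sim\lambda^{(n+1)/2-1/p}$. Dividing, $\|\mathscr A_1 f_\lambda\|_p/\|f_\lambda\|_p\gtrsim\lambda^{\,n/p'-(n+1)/2+1/p}=\lambda^{(n-1)(1/2-1/p)}$, forcing $s\ge(n-1)(1/2-1/p)=(n-1)|1/2-1/p|$.

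\textbf{Case $1<p<2$:} take $\theta=0$ and shrink $\chi$ to the cone $\{\,|\xi/|\xi|-e_1|<1/10,\ 1/2<|\xi|<2\,\}$, where $e_1$ is a fixed unit vector. Then $f_\lambda$ (inverse transform of $\chi(\xi/\lambda)$) concentrates on $\{|x|\lesssim\lambda^{-1}\}$ with size $\sim\lambda^{n}$, so $\|f_\lambda\|_p\sim\lambda^{n/p'}$, while $\mathscr A_1 f_\lambda=G_1+G_2$ with $\widehat{G_1}=e^{2\pi i|\xi|}a_1(|\xi|)\chi(\xi/\lambda)$ and $\widehat{G_2}=e^{-2\pi i|\xi|}a_2(|\xi|)\chi(\xi/\lambda)$. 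Stationary phase shows $G_1$ concentrates on the cap of $\{|x|=1\}$ around $-e_1$ and $G_2$ on the cap around $e_1$, which are disjoint; on a $\lambda^{-1}$–neighbourhood of the $e_1$–cap, a set of measure $\sim\lambda^{-1}$, one has $|G_2|\gtrsim\lambda^{(n+1)/2}$ (using $|a_2|\ge c_0$ on $\{|\xi|\sim\lambda\}$) and $G_1=O_N(\lambda^{-N})$. Hence $\|\mathscr A_1 f_\lambda\|_p\gtrsim\lambda^{(n+1)/2-1/p}$ and $\|\mathscr A_1 f_\lambda\|_p/\|f_\lambda\|_p\gtrsim\lambda^{\,(n+1)/2-1/p-n/p'}=\lambda^{(n-1)(1/p-1/2)}=\lambda^{(n-1)|1/2-1/p|}$, forcing $s\ge(n-1)|1/2-1/p|$. (Alternatively, once $p>2$ is settled, the case $p<2$ follows by duality, since $\mathscr A_1^{\,*}$ is a multiplier of exactly the same form and commutes with $(1-\Delta)^{-s/2}$.)

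The step I expect to be the main obstacle — the only one going beyond the standard single–half–wave computation — is the decoupling of $G_1$ and $G_2$: a priori the two oscillatory summands of $\mathscr A_1 f_\lambda$ could interfere and cancel on the very set used to bound $\|\mathscr A_1 f_\lambda\|_p$ from below, and the device above, namely modulating $\widehat{f_\lambda}$ by $e^{i\theta|\xi|}$ so that one summand's symbol becomes non-oscillatory and (for $p<2$) localizing $\chi$ to a narrow cone so that $G_1$ and $G_2$ concentrate on antipodal caps, is precisely what rules this out. Once the decoupling is in hand, the remaining estimates are the classical ``focusing'' and Knapp–type stationary phase bounds — equivalently, the fact that the frequency–$\lambda$ part of the kernel of $e^{\pm i\sqrt{-\Delta}}$ is a thickened sphere of size $\sim\lambda^{(n+1)/2}$ and thickness $\sim\lambda^{-1}$ — which are already encoded in \eqref{e2.2}–\eqref{e2.3} and the Fourier integral operator bounds recalled in Section \ref{s2}.
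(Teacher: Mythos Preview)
Your argument is correct, but it proceeds along a genuinely different path from the paper's proof. The paper does \emph{not} use dyadically frequency--localized wave packets and a limiting argument in $\lambda$; instead it tests \eqref{e3.2} against a single Bessel--potential--type function $f_\beta$ with $\widehat{f_\beta}(\xi)=w(|\xi|)\chi(\xi)(1+|\xi|^2)^{-\beta/2}$, where $\chi$ is homogeneous of degree $0$ and supported in a narrow cone about $e_1$ and $w$ is chosen so that $a_2(|\xi|)w(|\xi|)$ becomes essentially constant. Using the asymptotic expansion of $\widehat{\chi\,d\sigma}$ (cf.\ \eqref{e3.4}--\eqref{e3.5}) together with an elementary oscillatory--integral lemma (Lemma~\ref{le3.3}), the paper shows that the $e^{2\pi i(x\cdot\xi+|\xi|)}$ part of $\mathscr A_1 f_\beta$ is bounded near $\{|x|=1,\ x/|x|\approx e_1\}$, while the $e^{2\pi i(x\cdot\xi-|\xi|)}$ part develops a singularity of order $\big||x|-1\big|^{\beta-(n+1)/2}$. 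A short system of inequalities in $\beta$ (see \eqref{e3.10}) then forces $s\ge (n-1)(1/p-1/2)$; the case $p>2$ is obtained from this by duality, using that $\mathscr A_1^{*}$ has the same structure as $\mathscr A_1$.

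Thus the decoupling of the two half--waves, which you correctly identified as the crux, is handled in the paper exclusively by cone localization plus the $\widehat{\chi\,d\sigma}$ asymptotics, not by your modulation trick $\widehat{f_\lambda}=e^{2\pi i|\xi|}\chi(\xi/\lambda)$ that focuses $G_2$ at the origin. Your approach is more in the spirit of the classical sharpness examples for $e^{\pm i\sqrt{-\Delta}}$ (focusing for $p>2$, Knapp/sphere concentration for $p<2$) and is arguably more elementary: it avoids Lemma~\ref{le3.3}, the system \eqref{e3.10}, and the duality step, and the kernel bound $|K_\lambda(x)|\lesssim_N\lambda^{(n+1)/2}(1+\lambda\big||x|-1\big|)^{-N}$ you rely on is standard. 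On the other hand, the paper's argument produces a single explicit counterexample function rather than a one--parameter family, and its singularity computation makes the failure of \eqref{e3.2} quantitatively transparent at the level of local integrability near $|x|=1$. Either route is perfectly acceptable for establishing Lemma~\ref{le3.2}.
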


\begin{proof}
	Let $\widehat{\gamma_\beta}(\xi) :=(1+|\xi|^{2})^{-\beta/2}$ 	and  $\varphi$ as in \eqref{e2.4}.     Let $w $ belong to ${\it S}^{0}$ (a symbol of order zero)    satisfying $ |w(r)|\geq c>0   $ on $\R$
	for some constant $c$. 
Moreover, $w $ equals $\big(\sum_{j\geq 0}^{N-1}d_j r^{-j}\big)^{-1}$ on  $\supp (1-\varphi)$, and equals constant near zero. Assume that $\chi(\xi)\in C^\infty(\Rn\backslash 0)$ is homogeneous of order $0$ and vanishes if $|\xi/|\xi|-v_1|\geq 10^{-2}$, where $v_1:=(1,0,\cdots,0)$. Define
	$$ \hat{f}_\beta(\xi)=w(|\xi|)\chi(\xi)\widehat{\gamma}_\beta(\xi).
	$$
	Since $w(|\xi|)\in S^{0}$ and $\chi$ is a H\"ormander multiplier, $w(|D|)$ and $\chi(D)$ are bounded on $L^{p}(\Rn)$, we have
	\begin{align}\label{e3.3} \|f_\beta\|_{W^{s,p}(\Rn)}=\|w(|D|)\chi(D)\gamma_{\beta-s}\|_{L^{p}(\Rn)}\leq C\|\gamma_{\beta-s}\|_{L^p(\Rn)}.
	\end{align}
	On the other hand, 	it follows  by \cite[Proposition 1.2.5]{G} that
	\begin{eqnarray*}
		|\gamma_{\beta-s}(x)|\leq \left\{
		\begin{array}{lll}
			C|x|^{-n+\beta-s} & {\rm if} & \, |x|\leq2,\\[4pt]
			Ce^{-|x|/2} & {\rm if} & \, |x|\geq2
		\end{array}
		\right.
	\end{eqnarray*}
	when  $0<\beta-s<n$.
	This implies that  whenever $0<\beta-s<n$ and  $(-n+\beta-s)p>-n$,  \eqref{e3.3} is finite.

	By the expansion in \cite[p. 360]{St1}, we can write for $|x|\geq 1$ and $|x/|x|+v_1|\leq 10^{-2}$ that
	\begin{align}\label{e3.4}
		 \widehat{\chi\text{d}\sigma}(x)= e^{2\pi i|x|}h(x)+e(x),
	\end{align}
	where $e$ belongs to ${\it S}^{-\infty}$ and $h\in {\it S}^{-(n-1)/2}$ can be splited into two terms:
	\begin{eqnarray}\label{e3.5}
		h(x)= c_0|x|^{-(n-1)/2}\chi(-x/|x|)+ \tilde{e}(x),\ \ \tilde{e}\in {\it S}^{-(n+1)/2}
	\end{eqnarray}
for all $|x|\geq1$.
	Using polar coordinate and \eqref{e3.4}, we see that if $|x/|x|-v_1|\leq 10^{-2}$,
	\begin{align}\label{e3.6}
		\int_{\Rn}  e^{2\pi i(x\cdot\xi+ |\xi|)}a_1(\xi)    \hat{f_\beta}(\xi) \,\text{d}\xi
		 &=\int_{0}^{\infty}\int_{S^{n-1}} e^{2\pi i(x\cdot r\theta+ r)}a_{1}(r)w(r) \chi(\theta)(1+r^{2})^{-\beta/2}r^{n-1}\,\text{d}r\text{d}\sigma(\theta)\nonumber\\
		&=\int_{0}^{\infty} e^{2\pi ir}\widehat{\chi\text{d}\sigma}(-rx)a_1(r)w(r) (1+r^{2})^{-\beta/2}r^{n-1}\,\text{d}r\nonumber\\
		&=\int_{0}^{\infty} e^{2\pi ir(|x|+1)}h(-rx)a_1(r)w(r) (1+r^{2})^{-\beta/2}r^{n-1}\,\text{d}r\nonumber \\
		& +\int_{0}^{\infty} e^{2\pi ir}e(-rx)a_1(r)w(r) (1+r^{2})^{-\beta/2}r^{n-1}\,\text{d}r.
	\end{align}
	By integration by parts, we have that 	if $|x/|x|-v_1|\leq 10^{-2}$ and $ 1/2\leq |x|\leq 2$,
	\begin{align}\label{e3.7}
		\left|	\int_{\Rn}  e^{2\pi i(x\cdot\xi+ |\xi|)}a_1(\xi)    \hat{f_\beta}(\xi) \,\text{d}\xi\right|\leq C.
	\end{align}

	Next  we calculate
	$$
	\int_{\Rn} e^{2\pi i(x\cdot\xi- |\xi|)}a_2(\xi) \hat{f_\beta}(\xi) \,\text{d}\xi
	$$ when $|x/|x|-v_1|\leq 10^{-2}$ and $||x|-1|\leq\varepsilon$ ( $\varepsilon>0$ is a small constant that will be chosen later). 	As \eqref{e3.6},  we write
	\begin{align*}
		\int_{\Rn} e^{2\pi i(x\cdot\xi- |\xi|)}a_2(\xi) \hat{f_\beta}(\xi) \,\text{d}\xi
		 &=C\int_{0}^{\infty}\int_{S^{n-1}} e^{2\pi i(x\cdot r\theta- r)}(1-\varphi(r)) \chi(\theta)(1+r^{2})^{-\beta/2}r^{n-1}\,\text{d}r\,\text{d}\sigma(\theta)\\
		 &=C\int_{0}^{\infty} e^{-2\pi ir}\widehat{\chi\text{d}\sigma}(-rx)(1-\varphi (r)) (1+r^{2})^{-\beta/2}r^{n-1}\,\text{d}r\\
		 &=C\int_{0}^{\infty} e^{2\pi ir(|x|-1)}h(-rx)(1-\varphi(r)) (1+r^{2})^{-\beta/2}r^{n-1}\,\text{d}r\\
		& +C\int_{0}^{\infty} e^{-2\pi ir}e(-rx)(1-\varphi(r)) (1+r^{2})^{-\beta/2}r^{n-1}\,\text{d}r.
	\end{align*}
	 The second term is bounded since $e\in S^{-\infty}$.
	Now we use  \eqref{e3.5} to write
	\begin{align*}
	&	\int_{\Rn} e^{2\pi i(x\cdot\xi- |\xi|)}a_2(\xi) \hat{f_\beta}(\xi) \,\text{d}\xi\\
	\hspace{0.8cm} 	 &=C\int_{0}^{\infty}  e^{2\pi ir(|x|-1)} \big[ c_0(r|x|)^{-\frac{n-1}{2}}\chi(x/|x|)   +  \tilde{e}(-rx)  \big](1-\varphi (r)) (1+r^{2})^{-\beta/2}r^{n-1}\,\text{d}r+O(1).
	\end{align*}
	
	To continue, we need the following result.
	
	\begin{lemma}\label{le3.3} Suppose  $|g^{(k)}(r)|\leq Cr^{m-k}$ for all $k\in\Z_+$. Then we have
		\begin{align}\label{e3.8}
			\bigg|\int_0^\infty e^{2\pi ir\tau} g(r)(1-\varphi(r)) \,\text{d}r\bigg|\leq C|\tau|^{-m-1}.
		\end{align} 		
	\end{lemma}
	
	\begin{proof}
		We write
		$$
		\int_0^\infty e^{2\pi ir\tau} g(r)(1-\varphi(r)) \,\text{d}r= \sum_{j\geq 1}\int_0^\infty e^{2\pi ir\tau} g(r)\psi(r/2^j) \,\text{d}r.
		$$
		For each $j$ and $N$, integration by parts shows
		\begin{align}\label{e3.9}
			\bigg|  \int_0^\infty e^{2\pi ir\tau} g(r)\psi(r/2^j) \,\text{d}r  \bigg|
			&= (2\pi)^{N}|\tau|^{-N}\bigg|\int_0^\infty e^{2\pi ir\tau}\bigg( \frac{d}{dr} \bigg)^N\big( g(r)\psi(r/2^j)\big) \text{d}r \bigg|       \notag \\
			&\leq C|\tau|^{-N}\int_{2^{j-1}\leq r\le 2^{j+1}} r^{m-N} \text{d}r \notag \\
			&\leq C|\tau|^{-N} 2^{j(m-N+1)}.
		\end{align}
 Setting $N=0$ for $2^j\le |\tau|^{-1}$, and  $N>m+1$
 otherwise. From this, it follows that
		\begin{align*}
			\bigg|\int_0^\infty e^{2\pi ir\tau} g(r)(1-\varphi(r)) \,\text{d}r\bigg|
			&\leq C\sum_{2^j\le |\tau|^{-1}} 2^{j(m+1)}+ C\sum_{2^j\ge |\tau|^{-1}}|\tau|^{-N} 2^{j(m-N+1)}\\
			&\leq C|\tau|^{-m-1}.
		\end{align*}
		This proves estimate  \eqref{e3.8}.
	\end{proof}
	
	\medskip
	
	\noindent
	{\bf Back to the proof of Lemma~\ref{le3.2}}. By Lemma~\ref{le3.3},
	$$
		\int_{0}^{\infty}  e^{2\pi ir(|x|-1)}   \tilde{e}(-rx)  (1-\varphi (r)) (1+r^{2})^{-\beta/2}r^{n-1}\,\text{d}r
	 =O\left(\big||x|-1\big|^{\beta-(n-1)/2}\right).
	$$
	Finally,  for $|x/|x|-v_1|\leq 10^{-2}$ and $||x|-1|\leq\varepsilon$, let us  estimate
	$$
	\int_{0}^{\infty} e^{2\pi ir(|x|-1)}(r|x|)^{-\frac{n-1}{2}}(1-\varphi (r)) (1+r^{2})^{-\beta/2}r^{n-1}\,\text{d}r.
	$$
	Note that by Lemma~\ref{le3.3} again, 	
	$$
	 |x|^{-\frac{n-1}{2}}\int_{0}^{\infty} e^{2\pi ir(|x|-1)}(1-\varphi(r)) r^{\frac{n-1}{2}}((1+r^{2})^{-\beta/2}-r^{-\beta})\,\text{d}r
	 =O\bigg(\big||x|-1\big|^{\beta-(n-1)/2+1}\bigg).
	$$
	For the term $	 						
	 |x|^{-\frac{n-1}{2}}\int_{0}^{\infty} e^{2\pi ir(|x|-1)}(1-\varphi(r)) r^{-\beta+\frac{n-1}{2}}\,\text{d}r,
	$
	we use  scaling to obtain that if $-\beta+\frac{n-1}{2}>-1$,
	\begin{align*}
		 |x|^{-\frac{n-1}{2}}\int_{0}^{\infty} e^{2\pi ir(|x|-1)}(1-\varphi(r)) r^{-\beta+\frac{n-1}{2}}\,\text{d}r&=|x|^{-\frac{n-1}{2}}\int_{0}^{\infty} e^{2\pi ir(|x|-1)} r^{-\beta+\frac{n-1}{2}}\,\text{d}r+O(1)\\
&=|x|^{-\frac{n-1}{2}}(|x|-1)^{\beta-\frac{n+1}{2}}\int_{0}^{\infty} e^{2\pi ir} r^{-\beta+\frac{n-1}{2}}\,\text{d}r+O(1).
	\end{align*}
Hence, there exists $\varepsilon_{1}\in(0,1/2)$ such that if $||x|-1|\leq \varepsilon_{1}$,
	$$
	 |x|^{-\frac{n-1}{2}}\bigg|\int_{0}^{\infty} e^{2\pi ir(|x|-1)}(1-\varphi(r)) r^{-\beta+\frac{n-1}{2}}\,\text{d}r\bigg|\geq C \big||x|-1\big|^{\beta-\frac{n+1}{2}}.
	$$
	Furthermore, we can find $0<\varepsilon\leq \varepsilon_1$ such that for $||x|-1|\le \varepsilon$,
	\begin{align*}
		\left|\int_{\Rn} e^{2\pi i(x\cdot\xi- |\xi|)}a_2(\xi) \hat{f_\beta}(\xi) \,\text{d}\xi\right|
		&\geq C \big||x|-1\big|^{\beta-\frac{n+1}{2}}-O\bigg(\big||x|-1\big|^{\beta-(n-1)/2+1}\bigg)\\
		%&\geq 2^{-\frac{n+1}{2}}\big||x|-1\big|^{\beta-\frac{n+1}{2}}-C\big||x|-1\big|^{\beta-(n-1)/2}\\
		&\geq C\big||x|-1\big|^{\beta-\frac{n+1}{2}} /2.
	\end{align*}

	This, together with \eqref{e3.7}, tells us that $f_{\beta}\in W^{s,p}(\Rn)$ and ${\mathscr A}_1(f_{\beta})\notin L^{p}(\Rn)$ if
	\begin{align}\label{e3.10}
		\left\{
		\begin{array}{rcl}
			&0<\beta-s<n, \quad   \\[4pt]
			&(-n+\beta-s)p>-n, \quad   \\[4pt]
			&-\beta+\frac{n-1}{2}>-1, \quad   \\[4pt]
			 &\big(\beta-(n+1)/2\big)p\leq -1, \\
		\end{array}
		\right.
	\end{align}
	which is solvable when $s< (n-1)(1/p-1/2)$. Hence,  if \eqref{e3.2} holds, then  we must have $s\geq (n-1)(1/p-1/2)$. By duality,
	\begin{align*}
		\|({\mathscr A}_1)^{*} f\|_{L^{p'}(\Rn)}\leq C\|f\|_{W^{s,p'}(\Rn)}.
	\end{align*}
 Because $({\mathscr A}_1)^{*}$ is essentially the same as ${\mathscr A}_1$, we must have $s\geq (n-1)(1/p'-1/2)=(n-1)(1/2-1/p)$ by the previous counterexample.  This proves Lemma~\ref{le3.2}, and then the proof of  Proposition~\ref{prop3.1} is complete.
\end{proof}

Next, let us    prove the following result.
\begin{proposition}\label{prop3.4}
	Let $n\geq 2$ and $p\geq 2$. Suppose
	\begin{align}\label{e3.11}
		\big\| \sup_{1\leq t\leq 2}|\frak{M}^\alpha_t f| \big\|_{L^p(\Rn)}\leq C\|f\|_{L^p(\Rn)}
	\end{align}
	holds for some $\alpha\in \mathbb{C}$. Then, we have
	$$
	{\rm Re}\,\alpha\geq \frac1p-\frac{n-1}{2}.
	$$
\end{proposition}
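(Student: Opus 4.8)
The plan is to mimic the proof of Proposition~\ref{prop3.1}: reduce to the model operator $\mathscr A_t$ of \eqref{e2.10} and then exhibit an explicit near-extremizer, which here will be a \emph{focusing} example rather than the fixed-time example of Lemma~\ref{le3.2}. First, by analytic interpolation with the endpoint ${\rm Re}\,\alpha=1$ (for which \eqref{e3.11} holds, being dominated by the Hardy--Littlewood maximal operator), it suffices to derive a contradiction from the assumption that \eqref{e3.11} holds for some $\alpha$ with ${\rm Re}\,\alpha\in\bigl(-\tfrac{n-1}{2},\ \tfrac1p-\tfrac{n-1}{2}\bigr)$; the restriction ${\rm Re}\,\alpha>-(n-1)/2$ is exactly what makes the expansion \eqref{e2.2} available. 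For such $\alpha$, combining the splitting \eqref{e2.4}, the elementary bounds \eqref{e2.5} and Lemma~\ref{le2.1} (with $N$ large), the hypothesis \eqref{e3.11} forces $\bigl\|\sup_{1\le t\le2}|B_tf|\bigr\|_{L^p(\Rn)}\le C\|f\|_{L^p(\Rn)}$, where $B_t$ is the operator whose symbol is the genuine main part of $\widehat{m_\alpha}(t\xi)$, namely $(t|\xi|)^{-(n-1)/2-\alpha}$ times the symbol of $\mathscr A_t$. Since for $t\in[1,2]$ the factor $(t|\xi|)^{-(n-1)/2-\alpha}$ is a harmless elliptic multiplier of order $-(n-1)/2-{\rm Re}\,\alpha$, setting $g=|D|^{-(n-1)/2-\alpha}f$ (legitimate because $\mathscr A_t$ only sees frequencies $\gtrsim M$) shows this is equivalent, for $g$ with $\widehat g$ supported in $\{|\xi|\gtrsim M\}$, to
\begin{equation*}
\bigl\|\sup_{1\le t\le2}|\mathscr A_tg|\bigr\|_{L^p(\Rn)}\le C\|g\|_{W^{s_0,p}(\Rn)},\qquad s_0=\tfrac{n-1}{2}+{\rm Re}\,\alpha ,
\end{equation*}
and the proposition reduces to showing that this estimate forces $s_0\ge 1/p$.

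To contradict it when $s_0<1/p$, I would fix $\lambda\gg M$ and take the focusing datum $\widehat{g_\lambda}(\xi)=e^{-2\pi i|\xi|}\chi(\xi)\phi(|\xi|/\lambda)$, where $\phi$ is a bump supported in $[1,2]$ and $\chi$ is a homogeneous degree-zero cutoff supported near $v_1=(1,0,\dots,0)$ as in the proof of Lemma~\ref{le3.2}. On the one hand, $g_\lambda$ is the solution at ``time $-1$'' of a frequency-$\lambda$ bump, so it concentrates in the shell $\{\,\big||x|-1\big|\lesssim\lambda^{-1}\}$ with $|g_\lambda|\lesssim\lambda^{(n+1)/2}$ there and rapid off-shell decay; hence $\|g_\lambda\|_p\lesssim\lambda^{(n+1)/2-1/p}$ and $\|g_\lambda\|_{W^{s_0,p}}\sim\lambda^{s_0}\|g_\lambda\|_p\lesssim\lambda^{s_0+(n+1)/2-1/p}$. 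On the other hand, writing $\mathscr A_tg_\lambda$ in polar coordinates and inserting the expansions \eqref{e3.4}--\eqref{e3.5} of $\widehat{\chi\,\mathrm{d}\sigma}$, the $e^{2\pi it|D|}a_1$-part and the $e^{-2\pi it|D|}a_2$-part of $\mathscr A_tg_\lambda$ are governed by the radial phases $r(|x|-(t-1))$ and $r(|x|-(t+1))$, so the first focuses on the sphere $\{|x|=t-1\}$ and the second on $\{|x|=t+1\}$. On the annulus $\{2\le|x|\le3\}$ only the second phase is ever stationary, and precisely when $t=|x|-1\in[1,2]$; since $|a_2(t|\xi|)|$ is bounded below there (its argument is $\sim\lambda\gg M$), integration in $r$ gives $|\mathscr A_{|x|-1}g_\lambda(x)|\gtrsim\lambda^{(n+1)/2}$, while the first part is $O(\lambda^{-N})$ and the remainders $\widetilde e\in S^{-(n+1)/2}$, $e\in S^{-\infty}$ from \eqref{e3.5} contribute at most $O(\lambda^{(n-1)/2})$. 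Thus $\sup_{1\le t\le2}|\mathscr A_tg_\lambda|\gtrsim\lambda^{(n+1)/2}$ on a set of measure $\sim1$, so $\bigl\|\sup_{1\le t\le2}|\mathscr A_tg_\lambda|\bigr\|_{L^p}\gtrsim\lambda^{(n+1)/2}$. Comparing with the bound for $\|g_\lambda\|_{W^{s_0,p}}$ and letting $\lambda\to\infty$ yields $s_0\ge1/p$, that is, ${\rm Re}\,\alpha\ge\tfrac1p-\tfrac{n-1}{2}$.

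The heart of the matter — and the step I expect to demand the most care — is the quantitative separation of the two half-wave pieces: one must check that on the annulus $\{2\le|x|\le3\}$ the forward half-wave $e^{2\pi it|D|}$ contributes nothing of size $\lambda^{(n+1)/2}$ for \emph{any} $t\in[1,2]$ (not only for the chosen $t=|x|-1$), and that every remainder symbol carried through the reduction — $\mathcal E_{N,\ell}$ and $\mathcal E$ from \eqref{e2.4}, $\widetilde e$ and $e$ from \eqref{e3.5}, and the tails of the Bessel asymptotics \eqref{e2.2} — is of strictly lower order than the focal value $\lambda^{(n+1)/2}$. This is precisely the ``no interference between $e^{it\sqrt{-\Delta}}$ and $e^{-it\sqrt{-\Delta}}$'' mechanism pointed to in the introduction, exploited here at the focal scale: the two focal spheres $\{|x|=t-1\}$ and $\{|x|=t+1\}$ remain disjoint for $t\in[1,2]$, which is what makes the lower bound survive. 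The remaining ingredients — the polar-coordinate reduction, the stationary/non-stationary phase estimates, and the $W^{s_0,p}$ computation for a frequency-localized datum — are routine.
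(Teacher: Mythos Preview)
Your argument is correct and genuinely different from the paper's. After the common reduction to Lemma~\ref{le3.5}, the paper tests $\mathscr A_t$ against a \emph{wave packet}: $\hat f$ is a smooth cutoff of the parabolic slab $\{|\xi_1-2^j|\lesssim\delta 2^j,\ |\xi'|\lesssim\delta 2^{j/2}\}$, so that $|\xi|-\xi_1=O(\delta^2)$ and the half-waves act essentially as translations $f(x\pm te_1)$; separation of the two pieces is obtained simply by restricting to $x_1>0$, and no stationary-phase expansion of $\widehat{\chi\,d\sigma}$ is needed. Your focusing example instead pre-multiplies by $e^{-2\pi i|\xi|}$ and uses the expansion \eqref{e3.4}--\eqref{e3.5}, separating the pieces by the disjointness of the focal shells $\{|x|=|t-1|\}$ and $\{|x|=t+1\}$. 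Both yield the same inequality $\lambda^{(n+1)/2}\lesssim\lambda^{s_0+(n+1)/2-1/p}$ and hence $s_0\ge 1/p$. The paper's route is slightly more elementary (only \eqref{e3.14} and size bounds, no oscillatory-integral asymptotics), while yours recycles the machinery of Lemma~\ref{le3.2} and makes the geometric picture of non-interference transparent. One small correction: for $x/|x|$ near $v_1$ the radial phase of the $e^{2\pi it|D|}a_1$-piece is $r(|x|+(t-1))$, not $r(|x|-(t-1))$; this is harmless since on $\{2\le|x|\le 3\}$ it is still bounded away from zero for $t\in[1,2]$, so your conclusion stands. For the upper bound $\|g_\lambda\|_{L^p}\lesssim\lambda^{(n+1)/2-1/p}$ you may find it cleanest to note $\|g_\lambda\|_2=\|\hat g_\lambda\|_2\sim\lambda^{n/2}$ and $\|g_\lambda\|_\infty\lesssim\lambda^{(n+1)/2}$ (dispersive estimate for frequency-$\lambda$ data at time $1$), then interpolate.
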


Let us prove Proposition~\ref{prop3.4}.
By an  interpolation  with ${\rm Re}\,\alpha=1$  it suffices to show that \eqref{e3.11} can not hold if ${\rm Re}\,\alpha\in (-{n-1\over 2},\frac1p-\frac{n-1}{2})$. Here ${\rm Re}\,\alpha>-{n-1\over 2}$ is to make sure that we can use \eqref{e2.2}.
By \eqref{e2.5} and Lemma~\ref{le2.1},  the proof of  Proposition \ref{prop3.4}
reduces to show  the following lemma.

\begin{lemma}\label{le3.5}
	Let $n\geq 2$ and $p>1$. Suppose
	\begin{align}\label{e3.12}
		\big\|\sup_{1\leq t\leq 2}|{\mathscr A}_t f|\big\|_{L^p(\R^n)}\leq C\|f\|_{W^{s,p}(\Rn)}
	\end{align}
	holds for some $s\in\R$. 	Then,  we have $s\geq 1/p$.
\end{lemma}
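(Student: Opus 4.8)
The plan is to disprove \eqref{e3.12} whenever $s<1/p$ by a scaling example built on the following one‑dimensional mechanism: if $\widehat{f}$ is supported where $|\xi|\approx\xi_1$, then the two half‑wave pieces of ${\mathscr A}_t$ act essentially as translation by $\pm t$ in the $x_1$‑direction, so a bump of height $\sim\lambda$ concentrated on $\{|x_1|\lesssim\lambda^{-1}\}$ is moved so that its peak sweeps out a unit $x_1$‑interval as $t$ ranges over $[1,2]$. Concretely, for each large $\lambda$ I would construct $f_\lambda$, frequency‑localized at scale $\lambda$, with $\sup_{1\le t\le2}|{\mathscr A}_tf_\lambda|\gtrsim\lambda$ on a set of measure $\sim1$ while $\|f_\lambda\|_{W^{s,p}(\Rn)}\lesssim\lambda^{s+1-1/p}$; letting $\lambda\to\infty$ then forces $s\ge1/p$.

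\emph{The test function.} Fix nonnegative bumps $\sigma_1\in C_c^\infty((1,2))$ and $\sigma_2\in C_c^\infty\big(B^{n-1}(0,1/100)\big)$ with $\int\sigma_1>0$, $\int\sigma_2>0$, and set $\widehat{f_\lambda}(\xi)=\sigma_1(\xi_1/\lambda)\,\sigma_2(\xi')$ for $\xi=(\xi_1,\xi')\in\R\times\R^{n-1}$. Then $\widehat{f_\lambda}$ is supported on the plank $\{\xi_1\sim\lambda,\ |\xi'|\le1/100\}$, on which $|\xi|\sim\lambda$; in particular, since $\widehat{f_\lambda}$ is frequency‑localized at scale $\lambda$, $\|f_\lambda\|_{W^{s,p}(\Rn)}\lesssim\lambda^s\|f_\lambda\|_{L^p(\Rn)}$. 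Separating variables, $f_\lambda(x)=\lambda\,g_1(\lambda x_1)g_2(x')$ for fixed Schwartz functions $g_1,g_2$ with $g_1(0)=\int\sigma_1$, $g_2(0)=\int\sigma_2$, so $\|f_\lambda\|_{L^p(\Rn)}\sim\lambda^{1-1/p}$ and hence $\|f_\lambda\|_{W^{s,p}(\Rn)}\lesssim\lambda^{s+1-1/p}$.

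\emph{Largeness of the maximal function.} On $\supp\widehat{f_\lambda}$ write $|\xi|=\xi_1+\rho(\xi)$ with $\rho(\xi)=|\xi'|^2/(\xi_1+|\xi|)$; then $0\le\rho(\xi)\lesssim\lambda^{-1}$ with each $\xi$‑derivative of $\rho$ carrying an extra factor $\lambda^{-1}$, so $e^{\pm2\pi it\rho(\xi)}=1+O(\lambda^{-1})$ uniformly for $t\in[1,2]$. Moreover $1-\varphi\equiv1$ on $\{|\xi|\sim\lambda\}$ and, by \eqref{ee2.1}, $a_1(r)=c(\pi,\alpha)b_0+O(r^{-1})$, $a_2(r)=c(\pi,\alpha)d_0+O(r^{-1})$ for $r\ge2M$ with $c(\pi,\alpha)d_0\neq0$ (recall $|a_2|\ge c_{low}>0$ there); hence $a_1(t|\xi|)=c(\pi,\alpha)b_0+O(\lambda^{-1})$ and $a_2(t|\xi|)=c(\pi,\alpha)d_0+O(\lambda^{-1})$ on $\supp\widehat{f_\lambda}$, uniformly in $t$. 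Substituting these into \eqref{e2.10}, using $x\cdot\xi\pm t|\xi|=(x_1\pm t)\xi_1+x'\cdot\xi'\pm t\rho(\xi)$, and separating variables will give, for $x_1,t\in[1,2]$ and all $x'$,
\[
{\mathscr A}_tf_\lambda(x)=c(\pi,\alpha)d_0\,\lambda\,g_1\!\big(\lambda(x_1-t)\big)g_2(x')+c(\pi,\alpha)b_0\,\lambda\,g_1\!\big(\lambda(x_1+t)\big)g_2(x')+O(1),
\]
with $g_1,g_2$ as above up to an irrelevant complex conjugation, the $O(1)$ arising from integrating the $O(\lambda^{-1})$ amplitude--phase error over $\supp\widehat{f_\lambda}$, a set of measure $\sim\lambda$. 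Taking $t=x_1$, the middle term is $O(1)$ (the rapid decay of $g_1$ at $\lambda(x_1+t)\ge2\lambda$ beats the factor $\lambda$), while the first term equals $c(\pi,\alpha)d_0\,\lambda\,g_1(0)\,g_2(x')$; choosing $\delta_0>0$ with $|g_2(x')|\ge\tfrac12\int\sigma_2$ on $|x'|\le\delta_0$, I would obtain $\sup_{1\le t\le2}|{\mathscr A}_tf_\lambda(x)|\ge|{\mathscr A}_{x_1}f_\lambda(x)|\gtrsim\lambda$ for $x\in E:=\{x\in\Rn:\ x_1\in[1,2],\ |x'|\le\delta_0\}$ once $\lambda$ is large. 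Since $|E|\sim1$, this gives $\big\|\sup_{1\le t\le2}|{\mathscr A}_tf_\lambda|\big\|_{L^p(\Rn)}\gtrsim\lambda$.

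\emph{Conclusion and the main obstacle.} Feeding the two displayed estimates into \eqref{e3.12} will give $\lambda\lesssim\lambda^{s+1-1/p}$ for all large $\lambda$, hence $s\ge1/p$, which is the claim. The step that will need the most care is the error analysis behind the expansion of ${\mathscr A}_tf_\lambda$: since $\|\widehat{f_\lambda}\|_{L^1}\sim\lambda$, a crude bound on the $O(\lambda^{-1})$ amplitude--phase error yields only an $O(1)$ remainder, so one has to confirm --- which the separation of variables makes transparent --- that the dominant half‑wave part (phase $x\cdot\xi-t|\xi|$) is genuinely of size $\sim\lambda$ on the slab $\{|x_1-t|\lesssim\lambda^{-1},\ |x'|\le\delta_0\}$, so that it swamps that $O(1)$, and that the complementary half‑wave part stays $O(1)$ there; the size $\sim\lambda$ of the main term is produced by integrating $\sigma_1(\xi_1/\lambda)$ over an $x_1$‑frequency interval of length $\sim\lambda$.
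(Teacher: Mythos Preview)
Your argument is correct and rests on the same mechanism as the paper's proof of Lemma~\ref{le3.5}: frequency-localize so that $|\xi|\approx\xi_1$, observe that the two half-wave pieces of ${\mathscr A}_t$ behave like translations by $\pm t$ in the $x_1$-direction, and then choose $t=x_1$ so that the minus piece produces a large value while the plus piece is negligible.

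The difference lies in the choice of test function. The paper uses the standard wave packet with frequency support $\{|\xi_1-2^j|\lesssim\delta 2^j,\ |\xi'|\lesssim\delta 2^{j/2}\}$, so that $|\xi|-\xi_1=O(\delta^2)$ is a small \emph{constant}; the resulting maximal function is large on a set of measure $\sim 2^{-(n-1)j/2}$, and the three terms in the decomposition \eqref{e3.16} have to be balanced by choosing $\delta$ small. You instead take the thinner plank $\{\xi_1\sim\lambda,\ |\xi'|\le 1/100\}$, which gives $|\xi|-\xi_1=O(\lambda^{-1})$ and makes the amplitude--phase error $O(\lambda^{-1})$; integrating this error over $\supp\widehat{f_\lambda}$ (of measure $\sim\lambda$) directly yields $O(1)$, so no $\delta$-balancing is needed, and the separation of variables $\widehat{f_\lambda}(\xi)=\sigma_1(\xi_1/\lambda)\sigma_2(\xi')$ makes the main terms explicit. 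Your version thus trades the Knapp-type scaling for a cleaner error analysis; both lead to the same inequality $\lambda\lesssim\lambda^{s+1-1/p}$ (after cancellation of the common powers in the paper's computation \eqref{e319}).
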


\begin{proof}
	Let $\delta>0$ be a small number to be chosen later, and denote  $\xi= (\xi_1, \xi^\prime)\in \Rn$. For a given  large $j\in {\mathbb N}$, we let $\hat{f}\geq0$  be a smooth cut-off of
	the set
	\begin{align}\label{e3.13}
		 \left\{(\xi_1,\xi^\prime)\in\Rn:|\xi_1-2^j|\leq \delta 2^{j-1}, |\xi^\prime|\leq \delta 2^{j/2}\right\}
	\end{align}
	such that $\big|\partial_{\xi}^{\beta} \hat f(\xi)\big|\leq C_{\delta,\beta} 2^{-j|\beta'|/2}2^{-j|\beta_{1}|}$ for any $\beta=(\beta_{1},\beta')\in \mathbb{Z}_{+}^{n}$.
	By a simple calculation, we see that
	\begin{align}\label{e3.14}
		|\xi|-\xi_1\leq C\delta^2
	\end{align}
	in the support of $\hat{f}$. Let $j$ be large enough such that $(1-\varphi(t|\xi|))\hat{f}(\xi)=\hat{f}(\xi)$ for all $t\in[1,2]$, $\xi\in\Rn$ and
\begin{align}\label{e3.15-315}
\inf_{\xi\in\supp \hat f}|a_2(\xi)|\geq c_{low}>0.
\end{align}
Note by \cite[Chapter IX, Section 4]{St1} we have
	$$\sup_{1\leq t\leq2}\big|\partial_{\xi}^{\beta} \big(e^{2\pi it(|\xi|-\xi_1)}a_1(t|\xi|)\hat f(\xi)\big)\big|\leq C_{\delta,\beta} 2^{-j|\beta'|/2}2^{-j|\beta_{1}|}.$$
	Then for $1\leq t\leq 2$ and $x_1>0$, we use integration by parts to bound that
	\begin{align}\label{e3.15}
		\bigg|\int_{\Rn} e^{2\pi i(x\cdot\xi+ t|\xi|)}a_1(t|\xi|)  \hat{f}(\xi) \,\text{d}\xi\bigg|
		&= \bigg|\int_{\Rn}  e^{2\pi i(x+te_1)\cdot\xi}\bigg(e^{2\pi it(|\xi|-\xi_1)}a_1(t|\xi|) \hat{f}(\xi)\bigg) \,\text{d}\xi \bigg|     \\
		&\leq C_{\delta} 2^{-jN}2^{j\frac{n+1}{2}}(x_1+t)^{-N}\leq C_{\delta} 2^{-jN}2^{j\frac{n+1}{2}},            \notag
	\end{align}
	where $N\geq1$ and the constant $C_\delta$ is independent of $j$ and $t$.

	As for $\int_{\Rn} e^{2\pi i(x\cdot\xi- t|\xi|)}a_2(t|\xi|) \hat{f}(\xi) \,\text{d}\xi$ with $1\leq t\leq 2$, we split it into three terms
	\begin{align}\label{e3.16}
		\int_{\Rn} e^{2\pi i(x\cdot\xi- t|\xi|)}a_2(t|\xi|) \hat{f}(\xi) \,\text{d}\xi
		&= \int_{\Rn}  e^{2\pi i(x-te_1)\cdot\xi}\big(e^{2\pi it(-|\xi|+\xi_1)}-1\big) a_2(t|\xi|)\hat{f}(\xi) \,\text{d}\xi \nonumber \\
		&+ \int_{\Rn}  (e^{2\pi i(x-te_1)\cdot\xi}-1)  a_2(t|\xi|)\hat{f}(\xi)\,\text{d}\xi \nonumber \\
		&+\int_{\Rn}  a_2(t|\xi|)\hat{f}(\xi) \,\text{d}\xi.
	\end{align}
	By \eqref{e3.14}, the first term of \eqref{e3.16} is bounded by
	$$
	C\int_{\Rn}  \big|t(-|\xi|+\xi_1)\big| \hat{f}(\xi) \,\text{d}\xi\leq C\delta^{2}\int_{\Rn}  \hat{f}(\xi) \,\text{d}\xi\leq C\delta^{n+2} 2^{j(n+1)/2}.
	$$
	If $|x_1-t|\leq \delta 2^{-j}$ and $|x^\prime|\leq 2^{-j/2}$, by the support condition \eqref{e3.13} of $\hat{f}$, we have
	$$
	\big|(x-tv_1)\cdot\xi\big|\leq C\delta, \, \textrm{for all}\, \xi\in\supp \hat{f},
	$$
	which implies the second term of \eqref{e3.16} is bounded by
	\begin{align*}
		C\int_{\Rn}  \big|(x-tv_1)\cdot\xi\big|  \hat{f}(\xi)\,\text{d}\xi\leq C\delta\int_{\Rn}  \hat{f}(\xi)\,\text{d}\xi\leq C\delta^{n+1} 2^{j(n+1)/2}.
	\end{align*}
	By \eqref{e3.15-315}, we have
	\begin{align*}
		\bigg|\int_{\Rn}  a_2(t|\xi|)\hat{f}(\xi) \,\text{d}\xi\bigg|\geq \frac{c_{low}}{2}\int_{\Rn}  \hat{f}(\xi) \,\text{d}\xi\geq C_{L}\delta^{n}2^{j\frac{n+1}{2}}.
	\end{align*}
	Then by \eqref{e3.16} and the above estimates, if $\delta\leq \min\{\frac{C_{L}}{2C_{U}},1\}$, we have
	\begin{align}\label{e3.17}
		\bigg|\int_{\Rn} e^{2\pi i(x\cdot\xi- t|\xi|)}a_2(t|\xi|) \hat{f}(\xi) \,\text{d}\xi\bigg|\geq\bigg|\int_{\Rn}  a_2(t|\xi|)\hat{f}(\xi) \,\text{d}\xi\bigg|-C_{U}\delta^{n+1}2^{j\frac{n+1}{2}}\geq \frac{C_{L}}{2}\delta^{n}2^{j\frac{n+1}{2}}
	\end{align}
	if $|x_1-t|\leq \delta 2^{-j}$ and $|x^\prime|\leq 2^{-j/2}$.
	It then follows from \eqref{e3.15} and \eqref{e3.17} that
	\begin{align}\label{e3.18}
		\sup_{1\leq t\leq 2}|{\mathscr A}_t f|\geq \frac{C_{L}}{2}\delta^{n}2^{j\frac{n+1}{2}} -C_{\delta} 2^{-jN}2^{j\frac{n+1}{2}}\geq \frac{C_{L}}{4}\delta^{n}2^{j\frac{n+1}{2}},
	\end{align}
	when $1\leq x_1\leq 2$, $|x^\prime|\leq 2^{-j/2}$ and $j\geq \frac{1}{N}\log_{2}(\frac{4C_{\delta}}{\delta^{n}C_{L}}+1)$.

	Assume \eqref{e3.12} is true. Then from the definition of $f$ and \eqref{e3.18}, we have
	\begin{align}\label{e319}
		 \frac{C_{L}}{4}\delta^{n}2^{(n+1)j/2-(n-1)j/(2p)}
		&\leq  \big\|\sup_{1\leq t\leq 2}|{\mathscr A}_t f|\big\|_{L^p(\R^n)}            \\
		&\leq C\|f\|_{W^{s,p}(\Rn)}\leq C_\delta 2^{sj}2^{(n+1)j/2-(n+1)j/(2p)}.\notag
	\end{align}
	Let $j\to\infty$, then we obtain $s\geq 1/p$. This proves Lemma~\ref{le3.5}, and then the proof of  Proposition~\ref{prop3.4} is complete.
\end{proof}

We finally present the endgame in the

\begin{proof}[Proof of {\rm (i)} of   Theorem~\ref{th1.1}]
This is a consequence of Proposition~\ref{prop3.1} and Proposition~\ref{prop3.4}.
\end{proof}

%----------------------------------------------------------------------------------------------------------

\iffalse

\begin{remark}
	%Note that for the spherically symmetric initial data, 	it was proved by   %M\"uller and Seeger
	%in \cite[Theorem 1]{MS}  that  the local smoothing estimate
%	\eqref{e1.8}
%	is true for all  $n\geq 2$.	
	In   \cite[Corollary 1.5]{NRS}, Nowak,  Roncal and  Szarek found
	sharp conditions for the  maximal operator ${\frak M}^\alpha$ acting on radial functions. More precisely,  we assume that  $n\geq 2, \alpha>(1-n)/2$ and $1\leq p<\infty$.  Then the operator ${\frak M}^\alpha$  is bounded  on $L^p_{\rm rad}({\mathbb R^n})$ if and only if
	$p>1$ and
	\begin{eqnarray*}		
		\left\{
		\begin{array}{llll}
			\alpha>1-n+ {n\over p}, \ \ &{\rm when}\ \ &p<2;\\[4pt]	
			\alpha> {1\over p}+ {1-n\over 2}, \ \ &{\rm when}\ \ &2\leq p \leq {2n\over n-1};\\[4pt]	
			\alpha\geq  {1-n\over p}, \ \ &{\rm when}\ \  &p> {2n\over n-1}.			 
		\end{array}	
		\right.	
	\end{eqnarray*}
	They also obtained the corresponding enpoint results in terms of optimal weak and restricted weak type estimates,
	see \cite[Corollary 1.6]{NRS}.
\end{remark}
\fi
\medskip

%----------------------------------------------------------------------------------------------------------

\medskip

\section{Proof of  {\rm (ii)}  of Theorem~\ref{th1.1}}
\setcounter{equation}{0}

In this section, we give a criterion that allows us to derive  $L^p$-boundedness for the maximal operator
$ {\frak M}^{\alpha}$  on ${\mathbb R^n}, n\geq 2$.  As a consequence,
   (ii) of   Theorem~\ref{th1.1} follows readily by applying
the result of Guth, Wang and Zhang \cite{GWZ} on local smoothing estimate on ${\mathbb R^2}$.
More precisely, we have the following result.

\begin{proposition}\label{prop4.1}
	Let $n\geq 2 $ and  $p>2$.   If the local smoothing estimate
	\begin{align}\label{e4.1}
		\big\| e^{it\sqrt{-\Delta}}f \big\|_{L^p(\Rn\times [1,2])} \leq C_{n,p} \|f\|_{W^{s, p}(\Rn)}
	\end{align}
	holds for some   $s\in\R$,   then   we have
	\begin{align}\label{e4.2}
		\big\|\sup_{t>0}|  {\frak M}_t^{\alpha}f |\big\|_{L^p(\Rn)}\leq C_{n,p,\alpha} \|f\|_{L^p(\Rn)}
	\end{align}
whenever
	$
	{\rm Re} \alpha > \max \big\{ -{(n-1)/p}, \ s -{(n-1)/2} +{1/p}\big\}.
	$
\end{proposition}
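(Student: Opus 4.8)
The plan is to transfer the local smoothing estimate \eqref{e4.1} to the maximal bound \eqref{e4.2} by a Littlewood--Paley decomposition combined with the Sobolev embedding in the $t$-variable provided by Lemma~\ref{le2.2}, in the spirit of \cite{MSS} and \cite{MYZ}. First I would reduce to the main term: by the splitting \eqref{e2.4}, together with \eqref{e2.5} and Lemma~\ref{le2.1} with $N$ chosen large, the contributions of the first two brackets of $\widehat{m_\alpha}(t\xi)$ give a maximal operator bounded on $L^p(\Rn)$ (dominated by the Hardy--Littlewood maximal function and by \eqref{e2.6}), so it suffices to bound $\big\|\sup_{t>0}|{\mathfrak M}^{\alpha,{\rm main}}_tf|\big\|_{L^p(\Rn)}$, where ${\mathfrak M}^{\alpha,{\rm main}}_t$ has Fourier multiplier $(t|\xi|)^{-(n-1)/2-\alpha}\big(e^{2\pi it|\xi|}a_1(t|\xi|)+e^{-2\pi it|\xi|}a_2(t|\xi|)\big)$; its amplitude carries a smoothing of order $\theta:=(n-1)/2+{\rm Re}\,\alpha>0$ (recall ${\rm Re}\,\alpha>-(n-1)/2$), and $a_1,a_2$ vanish where $t|\xi|\lesssim M$. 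The polynomial growth in ${\rm Im}\,\alpha$ coming from $\Gamma(\alpha)^{-1}$ is harmless and will be suppressed.

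Next I would establish a single-scale estimate. Writing $f=\sum_k P_kf$ with $\widehat{P_kf}$ supported in $|\xi|\sim 2^k$ and decomposing $t$ dyadically, $t\sim 2^l$, the vanishing of $a_1,a_2$ forces $m:=k+l\gtrsim\log_2 M$; after rescaling a block $t\sim 2^l$ to $t\in[1,2]$ the frequency becomes $\sim 2^m$ and the amplitude becomes a factor $\sim 2^{-m\theta}$ times a symbol of order zero. Applying Lemma~\ref{le2.2}, the endpoint term $\|{\mathfrak M}^{\alpha,{\rm main}}_1 g\|_{L^p}$ is handled by the fixed-time estimate of Miyachi--Peral (costing $2^{m(n-1)(1/2-1/p)}$), the space--time term by \eqref{e4.1} (costing $2^{ms}$), and the $\partial_t$-term by one extra power of the frequency, i.e.\ an extra $2^{m/p}$ after raising to the power $1/p$; collecting the factor $2^{-m\theta}$ yields
\begin{align*}
	\big\|\sup_{t\sim 2^l}|{\mathfrak M}^{\alpha,{\rm main}}_tP_kf|\big\|_{L^p(\Rn)}\le C\,2^{m\varrho}\,\|P_kf\|_{L^p(\Rn)},\qquad \varrho=\max\Big\{-\tfrac{n-1}{p}-{\rm Re}\,\alpha,\ s+\tfrac1p-\tfrac{n-1}{2}-{\rm Re}\,\alpha\Big\}.
\end{align*}
The hypothesis ${\rm Re}\,\alpha>\max\{-(n-1)/p,\ s-(n-1)/2+1/p\}$ is precisely the condition $\varrho<0$, so summing the geometric series in $l$ gives $\big\|\sup_{t>0}|{\mathfrak M}^{\alpha,{\rm main}}_tP_kf|\big\|_{L^p}\le C\|P_kf\|_{L^p}$ uniformly in $k$, and the same computation controls the square functions $\big(\int_0^\infty|{\mathfrak M}^{\alpha,{\rm main}}_tP_kf|^2\tfrac{dt}{t}\big)^{1/2}$ and $\big(\int_0^\infty|t\partial_t{\mathfrak M}^{\alpha,{\rm main}}_tP_kf|^2\tfrac{dt}{t}\big)^{1/2}$ by $C\|P_kf\|_{L^p}$.

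Finally I would reassemble. By the fundamental theorem of calculus --- the $t\to0^+$ boundary term vanishing since ${\mathfrak M}^{\alpha,{\rm main}}_t$ annihilates each fixed frequency for small $t$ --- one has
\begin{align*}
	\sup_{t>0}|{\mathfrak M}^{\alpha,{\rm main}}_tf(x)|^2\lesssim\Big(\int_0^\infty|{\mathfrak M}^{\alpha,{\rm main}}_tf(x)|^2\tfrac{dt}{t}\Big)^{1/2}\Big(\int_0^\infty|t\partial_t{\mathfrak M}^{\alpha,{\rm main}}_tf(x)|^2\tfrac{dt}{t}\Big)^{1/2},
\end{align*}
so by Cauchy--Schwarz and H\"older it remains to bound these two $L^2\big(\tfrac{dt}{t}\big)$-valued square functions in $L^p$. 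Here the Littlewood--Paley pieces must be recombined, and the hard part will be exactly this step: the single-band bound above carries no decay in $k$, so the bands cannot be summed in $\ell^1$; one must keep them summed in $\ell^2$, which forces the single-scale estimate to be run in a vector-valued (in the frequency index) form, applying \eqref{e4.1} together with a Littlewood--Paley inequality, and it is precisely for this $\ell^2$-summable square-function version that genuine local smoothing --- and not merely the fixed-time estimate --- is needed (for $p>2$ the classical $g$-function argument alone would only yield the conclusion under the strictly stronger condition ${\rm Re}\,\alpha>s-(n-1)/2+1/2$). Since $p\ge 2$, this reassembly then produces \eqref{e4.2}.
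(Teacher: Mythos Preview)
Your single-scale estimate (second paragraph) is correct and coincides with Lemma~\ref{le4.2} in the paper: for each scale $m$ of $t|\xi|$, Lemma~\ref{le2.2} together with the fixed-time bound and the assumed local smoothing yields exactly the exponent $\varrho$ you wrote. Where your proposal diverges from the paper --- and becomes genuinely incomplete --- is the reassembly (third paragraph). As you yourself flag, the $g$-function/FTC route only delivers ${\rm Re}\,\alpha>s-(n-1)/2+1/2$, strictly weaker than the claimed $1/p$. You then invoke a ``vector-valued (in the frequency index)'' upgrade without saying what the estimate is or how it follows from the scalar hypothesis \eqref{e4.1}; and it is not clear that any square-function argument can recover the $1/p$, since the $L^2_t$-norm built into the square function is precisely what forces the $1/2$.

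The paper bypasses this entirely. It decomposes $\mathfrak{M}_t^\alpha=\sum_j\mathfrak{M}_{j,t}^\alpha$ in the scale $j$ of $t|\xi|$, proves the single-scale bound $\big\|\sup_{t\in[1,2]}|\mathfrak{M}_{j,t}^\alpha f|\big\|_p\le C2^{-\delta j}\|f\|_p$ (Lemma~\ref{le4.2}, the same as your second paragraph), and then --- instead of square functions --- controls $\sup_{t>0}$ by the trivial pointwise embedding $\ell^\infty\hookrightarrow\ell^p$ over dyadic $t$-blocks:
\[
\big\|\sup_{t>0}|\mathfrak{M}_{j,t}^\alpha f|\big\|_p\le\Big(\sum_{k\in\Z}\big\|\sup_{t\in[2^k,2^{k+1}]}|\mathfrak{M}_{j,t}^\alpha f|\big\|_p^p\Big)^{1/p}.
\]
Each term on the right is automatically frequency-localized to $|\xi|\sim 2^{j-k}$, so after rescaling and Lemma~\ref{le4.2} it is $\le C2^{-\delta j}\|P_{j-k}f\|_p$; summing in $\ell^p$, then using the pointwise inclusion $\ell^p\subset\ell^2$ (here $p\ge2$) and Littlewood--Paley gives $\le C2^{-\delta j}\|f\|_p$, and the geometric series in $j$ finishes. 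Keeping $\ell^p$ throughout is exactly what preserves the $1/p$ gain from Lemma~\ref{le2.2}; your square-function detour trades it away.
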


\medskip

The proof of  Proposition~\ref{prop4.1} is inspired by \cite{MYZ}.
Let  $\varphi$ and $\{\psi_j\}_j$ be functions defined in Section~\ref{s2}. We write
\begin{align}\label{e4.3}
	\widehat{{\frak M}_t^\alpha f}(\xi)
	&= \varphi(t|\xi|)\widehat{m_{\alpha}}(t\xi)\hat{f}(\xi)+\sum_{j\ge 1}\psi_j(t|\xi|)\widehat{m_{\alpha}}(t\xi)\hat{f}(\xi)              \notag \\
	&=:\widehat{{\frak M}_{0,t}^\alpha f}(\xi)+\sum_{j\ge 1}\widehat{{\frak M}_{j,t}^\alpha f}(\xi).
\end{align}
To prove Proposition~\ref{prop4.1}, the first  strategy    is to
show that if one modifies the definition so that for each operator ${\frak M}_{j,t}^\alpha$,
the supremum is taken over  $1\leq t\leq 2$,
  then the resulting maximal function is bounded on $L^p({\mathbb R^n})$.

\begin{lemma}\label{le4.2}
	Let $n\geq 2 $ and  $p>2$.  Under the assumption \eqref{e4.1} of Proposition~\ref{prop4.1},
	 there exists some $\delta>0$ such that
	\begin{align}\label{e4.4}
		\big\|\sup_{t\in [1,2]}|  {\frak M}_{j,t}^{\alpha}f |\big\|_{L^p(\Rn)}\leq C2^{-\delta j} \|f\|_{L^p(\Rn)}.
	\end{align}
if ${\rm Re}\, \alpha > \max \big\{ -{(n-1)/p}, s -{(n-1)/2} +{1/p}\big\}$.
\end{lemma}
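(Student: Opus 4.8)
The plan is to fix $j\geq 1$, rescale the frequency variable to bring the problem to a unit-scale piece, and then apply Lemma~\ref{le2.2} (the Sobolev-type maximal inequality in the $t$-variable) together with the local smoothing hypothesis \eqref{e4.1}. First I would record the size of the multiplier $\psi_j(t|\xi|)\widehat{m_\alpha}(t\xi)$ on the annulus $|\xi|\sim 2^j/t$: using the asymptotic expansion \eqref{e2.4}--\eqref{ee2.1} of $\widehat{m_\alpha}$, the main contribution comes from the two oscillatory terms $e^{\pm 2\pi i t|\xi|}a_\ell(t|\xi|)|\xi|^{-(n-1)/2-\alpha}$ (the smooth part $\varphi(|\xi|)\widehat{m_\alpha}$ and the Schwartz error ${\mathcal E}$ are controlled by the Hardy--Littlewood maximal function as in \eqref{e2.5}, and the terms ${\mathcal E}_{N,\ell}$ are handled by Lemma~\ref{le2.1}), so on the support of $\psi_j(t|\xi|)$ the symbol is of size $\sim 2^{-j((n-1)/2+{\rm Re}\,\alpha)}$ and each $\xi$-derivative costs a factor $2^{-j}$. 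Thus ${\frak M}_{j,t}^\alpha f = 2^{-j((n-1)/2+{\rm Re}\,\alpha)}(c_1\, e^{it\sqrt{-\Delta}}g_{j}^{+} + c_2\, e^{-it\sqrt{-\Delta}}g_{j}^{-}) + (\text{harmless})$, where $\widehat{g_j^{\pm}}$ is, up to a bounded symbol, $\hat f$ restricted to $|\xi|\sim 2^j$; since the symbols defining $g_j^\pm$ are $L^p$-bounded multipliers uniformly in $j$, we have $\|g_j^{\pm}\|_{L^p}\lesssim \|f\|_{L^p}$ (and in fact $\|g_j^\pm\|_{L^p}\lesssim \|P_j f\|_{L^p}$ with $P_j$ a Littlewood--Paley projection, which is what lets the $j$-sum eventually converge).

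Next I would apply Lemma~\ref{le2.2} with $F(x,t) = {\frak M}_{j,t}^\alpha f(x)$ on $t\in[1,2]$. This reduces matters to the fixed-time bound $\sup_{1\le t\le 2}\|{\frak M}_{j,t}^\alpha f\|_{L^p}$ and the space-time bound $\|{\frak M}_{j,\cdot}^\alpha f\|_{L^p(\Rn\times[1,2])}^{1-1/p}\|\partial_t {\frak M}_{j,\cdot}^\alpha f\|_{L^p(\Rn\times[1,2])}^{1/p}$. For the space-time norm, after the rescaling $\xi\mapsto 2^j\xi$ (equivalently, using that $e^{it\sqrt{-\Delta}}$ restricted to frequency $\sim 2^j$ is, via parabolic rescaling in $t$ over the unit interval, governed by \eqref{e4.1}) the local smoothing hypothesis \eqref{e4.1} gives
\begin{align*}
\big\|e^{\pm it\sqrt{-\Delta}}g_j^{\pm}\big\|_{L^p(\Rn\times[1,2])}\leq C\,2^{sj}\|g_j^{\pm}\|_{L^p(\Rn)}\leq C\,2^{sj}\|f\|_{L^p(\Rn)}.
\end{align*}
Hence $\|{\frak M}_{j,\cdot}^\alpha f\|_{L^p(\Rn\times[1,2])}\lesssim 2^{-j((n-1)/2+{\rm Re}\,\alpha)}2^{sj}\|f\|_{L^p}$, and since $\partial_t$ acting on these operators produces an extra factor of $|\xi|\sim 2^j$ (times the same class of operators), $\|\partial_t {\frak M}_{j,\cdot}^\alpha f\|_{L^p(\Rn\times[1,2])}\lesssim 2^{j}\cdot 2^{-j((n-1)/2+{\rm Re}\,\alpha)}2^{sj}\|f\|_{L^p}$. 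For the fixed-time term, \cite[Theorem 2, Chapter IX]{St1} (fixed-time bounds for FIOs of order $0$) gives $\sup_{1\le t\le 2}\|{\frak M}_{j,t}^\alpha f\|_{L^p}\lesssim 2^{-j((n-1)/2+{\rm Re}\,\alpha)}2^{(n-1)(1/2-1/p)j}\|f\|_{L^p}$, which is dominated by the space-time contribution in the relevant range of $s$ (note $(n-1)(1/2-1/p)\le s + 1/p$ is exactly where the local smoothing exponent is nontrivial). Combining via Lemma~\ref{le2.2},
\begin{align*}
\big\|\sup_{1\le t\le 2}|{\frak M}_{j,t}^\alpha f|\big\|_{L^p}\lesssim 2^{-j((n-1)/2+{\rm Re}\,\alpha)}2^{sj}2^{j/p}\|f\|_{L^p} = 2^{-j(\,(n-1)/2 - 1/p - s + {\rm Re}\,\alpha\,)}\|f\|_{L^p}.
\end{align*}
The exponent is $-\delta j$ with $\delta = (n-1)/2 - 1/p - s + {\rm Re}\,\alpha > 0$ precisely when ${\rm Re}\,\alpha > s - (n-1)/2 + 1/p$; the alternative lower bound ${\rm Re}\,\alpha > -(n-1)/p$ enters only to make Lemma~\ref{le2.1} applicable for the ${\mathcal E}_{N,\ell}$-pieces with a fixed large $N$, so that those pieces also contribute a geometrically decaying tail.

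The main obstacle I expect is the bookkeeping of the parabolic rescaling: one must check carefully that the hypothesis \eqref{e4.1}, stated for a single frequency-localized solution on the time interval $[1,2]$, really does control the frequency-$2^j$ piece $\|e^{\pm it\sqrt{-\Delta}}g_j^{\pm}\|_{L^p(\Rn\times[1,2])}$ with the claimed gain $2^{sj}$ and no loss in the time localization — this uses that $e^{it\sqrt{-\Delta}}$ is, up to harmless errors, local at scale $t\sim 1$, so the half-wave propagator at frequency $2^j$ over $t\in[1,2]$ is equivalent (after $\xi\mapsto 2^j\xi$, $x\mapsto 2^{-j}x$) to the propagator at unit frequency over a time interval of length $2^j$, which is then tiled by $O(2^j)$ unit intervals, each estimated by \eqref{e4.1}, and the resulting $2^j$ is absorbed into the $2^{sj}$ factor exactly as in \cite{MYZ}. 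A secondary technical point is verifying that the symbols $a_1, a_2$ and the cutoffs $\psi_j$ genuinely define FIOs of order $0$ after extracting the scalar factor $2^{-j((n-1)/2+{\rm Re}\,\alpha)}$, with seminorms uniform in $j$ — this is where \eqref{e2.3} and the $S^0$-nature of the $a_\ell$ on $\supp(1-\varphi)$ are used. Once these points are in place, summing \eqref{e4.4} over $j\geq 1$ (which we do in the subsequent argument, not here) yields the boundedness of the part of ${\frak M}^\alpha$ where each dyadic piece has its supremum restricted to $[1,2]$.
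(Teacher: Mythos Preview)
Your approach is essentially the paper's: reduce via \eqref{e2.4}, \eqref{e2.5} and Lemma~\ref{le2.1} to the oscillatory pieces, then apply Lemma~\ref{le2.2}, estimating the fixed-time term by the FIO bound \cite[Theorem~2, Chapter~IX]{St1} and the space-time terms by the local smoothing hypothesis~\eqref{e4.1}. Two points, however, are off.

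First, no parabolic rescaling or time-tiling is needed. The hypothesis~\eqref{e4.1} is stated for \emph{arbitrary} $f\in W^{s,p}(\Rn)$, not for unit-frequency data; hence for $g_j^\pm$ frequency-localized to $|\xi|\sim 2^j$ one simply has $\|g_j^\pm\|_{W^{s,p}}\sim 2^{sj}\|g_j^\pm\|_{L^p}$ and \eqref{e4.1} applies directly. The paper makes this explicit by observing that $\psi_j(t|\cdot|)\in S^0$ uniformly in $t\in[1,2]$ and $j\ge 1$, so one may replace $\psi_j(t|\xi|)$ by a $t$-independent cutoff $\tilde\psi_j(\xi)$ and then invoke~\eqref{e4.1}. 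Your proposed rescale-and-tile argument would, if carried out, produce an extraneous factor and is in any case unnecessary.

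Second, your attribution of the condition ${\rm Re}\,\alpha>-(n-1)/p$ to Lemma~\ref{le2.1} is wrong. Lemma~\ref{le2.1} only requires $N>-(n-2)/p-{\rm Re}\,\alpha$, which can be arranged for \emph{any} $\alpha$ by taking $N$ large. The condition ${\rm Re}\,\alpha>-(n-1)/p$ comes from the fixed-time term in Lemma~\ref{le2.2}: the FIO bound gives $\|T_{0,j}f(\cdot,1)\|_{L^p}\lesssim 2^{(n-1)(1/2-1/p)j}\|f\|_{L^p}$, and for this to be $\le 2^{[(n-1)/2+{\rm Re}\,\alpha-\delta]j}$ one needs exactly ${\rm Re}\,\alpha>-(n-1)/p$. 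Your claim that this term is always dominated by the space-time contribution $2^{(s+1/p)j}$ amounts to assuming $s\ge (n-1)/2-n/p$, which is not part of the hypothesis (though it is the expected range for~\eqref{e4.1}). The paper keeps both terms and obtains
\[
\big\|\sup_{t\in[1,2]}|T_{0,j}f(\cdot,t)|\big\|_{L^p(\Rn)}\le C\big(2^{(n-1)(1/2-1/p)j}+2^{(s+1/p)j}\big)\|f\|_{L^p(\Rn)},
\]
so that the two branches of the $\max$ in the statement correspond precisely to these two terms.
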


\begin{proof}
	By \eqref{e2.4}, \eqref{e2.5} and \eqref{e2.7}, \eqref{e4.4} reduces to
	\begin{align}\label{e4.5}
		\big\|\sup_{t\in [1,2]}|  \mathscr{A}_{j,t}f |\big\|_{L^p(\Rn)}\leq C2^{[(n-1)/2+{\rm Re}\,\alpha-\delta]j} \|f\|_{L^p(\Rn)},
	\end{align}
	where $\widehat{\mathscr{A}_{j,t}f}(\xi)=\psi_j(t|\xi|)\widehat{\mathscr{A}_{t}f}(\xi)$ and $\mathscr{A}_t f$ is defined in \eqref{e2.10}. By \eqref{ee2.1}, we can write
	\begin{align*}
		{\mathscr A}_{j,t} f(x)= C\sum_{l=0}^{N-1}\int_{\Rn} \left( b_l e^{2\pi i(x\cdot\xi+ t|\xi|)} +  d_l e^{2\pi i(x\cdot\xi- t|\xi|)}  \right)|t\xi|^{-l}\psi_j(t|\xi|) \hat{f}(\xi) \,\text{d}\xi,
	\end{align*}
	which is a linear combination of
	$$
	T_{\ell,j}f(x,t)=:\int_{\Rn}e^{2\pi i(x\cdot\xi\pm t|\xi|)}|t\xi|^{-\ell}\psi_j(t|\xi|)\hat{f}(\xi)\,\text{d}\xi,\ \ \ \ell=0,1,\cdots,N-1.
	$$
	Hence, the proof of  \eqref{e4.5}   reduces to
	\begin{align}\label{e4.6}
		\big\|\sup_{t\in [1,2]}|  T_{0,j}f(\cdot,t) |\big\|_{L^p(\Rn)}\leq C2^{ [(n-1)/2+{\rm Re}\,\alpha-\delta]j} \|f\|_{L^p(\Rn)}, \ \ \ j\geq 1.
	\end{align}

	Now we apply Lemma~\ref{le2.2} to deal with \eqref{e4.6}. 
	First, it follows from \cite[Theorem 2, Chapter IX]{St1} that
	\begin{align}\label{e4.8}
		\|  T_{0,j}f(\cdot,1) \|_{L^p(\Rn)}\leq C 2^{(n-1)(1/2-1/p)j}\|f\|_{L^p(\Rn)}.
	\end{align}
	Next, we observe that for any $1\leq t\leq 2$ and $j\ge 1$, there holds
	$$
	\big|\partial_\xi^\beta \big(\psi_j(t|\xi|)\big)\big|\leq C(1+|\xi|)^{-|\beta|},
	$$
	where $\beta$ is any multi-index. So $\psi_j(t|\cdot|)\in S^0$ uniformly $1\leq t\leq 2$ and $j\ge 1$, hence
	\begin{align}\label{e4.7}	
		\int_{\Rn}\bigg| \int_{\Rn} e^{2\pi i(x\cdot\xi\pm t|\xi|)}\psi_j(t|\xi|)\hat{f}(\xi)\,\text{d}\xi \bigg|^p\,\text{d}x\leq C	 \int_{\Rn}\bigg| \int_{\Rn} e^{2\pi i(x\cdot\xi\pm t|\xi|)}\tilde{\psi}_j(\xi)\hat{f}(\xi)\,\text{d}\xi \bigg|^p\,\text{d}x,
	\end{align}
	where constant $C$ is independent of $t$ and $j$. Here  $\tilde{\psi}_j$ equals to 1 if $|\xi|\in [2^{j-2}M,2^{j+1}M]$ and vanishes if $|\xi|\notin [2^{j-3}M,2^{j+2}M]$, so that $\tilde{\psi}_j$ equals to 1  on the support of $\psi_j(t|\cdot|)$ when $1\leq t\leq 2$. Then we  apply our assumption \eqref{e4.1} on local smoothing estimate to  \eqref{e4.7} to obtain
	\begin{align*}
		\|  T_{0,j}f \|_{L^p(\Rn\times [1,2])}\leq C 2^{s j}\|f\|_{L^p(\Rn)},
	\end{align*}
%By  the assumption \eqref{e4.1} again, the similar argument as above shows  that  the operator
% see  $\partial_t T_{0,j}(x,t)$
	and by the same token, the operator
	$$
	\partial_t T_{0,j}(x,t) = \int_{\Rn}e^{2\pi i(x\cdot\xi\pm t|\xi|)}\big(\pm 2\pi i|\xi|\psi_j(t|\xi|)+|\xi|\psi_j^\prime(t|\xi|)\big)\hat{f}(\xi)\,\text{d}\xi.
	$$
	satisfies
	%ying by \eqref{e555} again that
	\begin{align*}
		\|  \partial_tT_{0,j}f \|_{L^p(\Rn\times [1,2])}\leq C 2^{(s+1)j}\|f\|_{L^p(\Rn)}.
	\end{align*}
	Thus, we use  Lemma~\ref{le2.2} to get
	\begin{align*}
		\big\|\sup_{t\in [1,2]}|  T_{0,j}f(\cdot,t) |\big\|_{L^p(\Rn)}
		\leq C \big(2^{(n-1)(1/2-1/p)j}+2^{(s+1/p)j}\big)\|f\|_{L^p(\Rn)}.
	\end{align*}
	When ${\rm Re}\, \alpha > \max \big\{ -{(n-1)/p}, \ s -{(n-1)/2} +{1/p}\big\}$, it is seen that estimate \eqref{e4.8} holds with $\delta= {\rm Re}\, \alpha-\max \big\{ -{(n-1)/p}, \ s -{(n-1)/2} +{1/p}\big\}$.
\end{proof}

\medskip

Finally, we can apply  Lemma~\ref{le4.2} to prove   Proposition~\ref{prop4.1}.

\begin{proof}[Proof of Proposition~\ref{prop4.1}]
	By \eqref{e4.3} and \eqref{e2.5}, \eqref{e4.2} reduces to
	\begin{align}\label{e4.12}
		\big\|\sup_{t>0}|  {\frak M}_{j,t}^{\alpha}f |\big\|_{L^p(\Rn)}\leq C2^{-\delta j} \|f\|_{L^p(\Rn)}
	\end{align}
	for some $\delta>0$. Since $\ell^p\subseteq\ell^\infty$, we have
	\begin{align}\label{e4.13}
		\big\|\sup_{t>0}|  {\frak M}_{j,t}^{\alpha}f |\big\|_{L^p(\Rn)}\leq \bigg(\sum_{k\in\Z} \big\|\sup_{t\in [2^k,2^{k+1}]}|  {\frak M}_{j,t}^{\alpha}f |\big\|_{L^p(\Rn)}^p\bigg)^{1/p}.
	\end{align}
	However, it follows from Lemma~\ref{le4.2} and a rescaling $t\to 2^{-k} t$ that
	\begin{align}\label{e4.14}
		\big\|\sup_{t\in [2^k,2^{k+1}]}|  {\frak M}_{j,t}^{\alpha}f |\big\|_{L^p(\Rn)}\leq C2^{-\delta j}\|f\|_{L^p(\Rn)}.
	\end{align}
 Then for $2^k\leq t\leq 2^{k+1}$, there must be $|\xi|\in [2^{j-k-2}M,2^{j-k+1}M]$. This tells us that  we can rewrite \eqref{e4.14} as
	$$
	\big\|\sup_{t\in [2^k,2^{k+1}]}|  {\frak M}_{j,t}^{\alpha}f |\big\|_{L^p(\Rn)}\leq C2^{-\delta j}\|P_{j-k}f\|_{L^p(\Rn)}.
	$$
	This, together with \eqref{e4.13}, implies
	\begin{align*}
		\big\|\sup_{t>0}|  {\frak M}_{j,t}^{\alpha}f |\big\|_{L^p(\Rn)}
		&\leq C2^{-\delta j}\bigg(\sum_{k\in\Z} \|P_{j-k}f\|_{L^p(\Rn)}^p\bigg)^{1/p} \\
		&=C2^{-\delta j}\bigg\|\bigg(\sum_{k\in\Z}|P_{j-k}f|^p\bigg)^{1/p}\bigg\|_{L^p(\Rn)}\\
		&\leq C2^{-\delta j}\bigg\|\bigg(\sum_{k\in\Z}|P_{j-k}f|^2\bigg)^{1/2}\bigg\|_{L^p(\Rn)}
	\end{align*}
	since $p>2$. By the Littlewood-Paley inequality,
	$$
	 \bigg\|\bigg(\sum_{k\in\Z}|P_{j-k}f|^2\bigg)^{1/2}\bigg\|_{L^p(\Rn)}\leq C\|f\|_{L^p(\Rn)}.
	$$
This   proves \eqref{e4.12}. Hence,   the proof of   Proposition~\ref{prop4.1} is complete.
\end{proof}

\medskip

\noindent
\begin{remark} i)
In  the dimension $n\geq 3$    Gao {\it et al.} \cite{GLMX} obtained improved local smoothing estimates for the wave equation, that is,
 \eqref{e4.1} holds with $s= (n-1)({1/2}-1/p)-\sigma$ for all $\sigma<2/p-1/2$  when
\begin{eqnarray*}
	p>
	\left\{
	\begin{array}{lll}
		{2(3n+5)\over 3n+1}, \ \ \  {\rm for}\   & n \ {\rm odd};  \\[4pt]
		{2(3n+6)\over 3n+2}, \ \ \  {\rm for}\   &n \ {\rm even}.
	\end{array}
	\right.
\end{eqnarray*}
Applying  Proposition~\ref{prop4.1}, we get  that
  \eqref{e1.3} holds if 	${\rm Re}\,\alpha> 	\alpha(p, n)$ where
\begin{eqnarray} \label{e4.22}  \hspace{1cm}
	\alpha(p, n)=
	\left\{
	\begin{array}{lll}
		\max\left\{-{n-1\over p}, -\frac{3}{8}(n-1)+\frac{5-n}{4p},\frac{4(n-1)}{(3n+5)(n+3)}-\frac{n^{2}-5}{(n+3)p}\right\}, \ \ \ & {\rm for}\    n \ {\rm odd};  \\[6pt]
		\max\left\{-{n-1\over p},-\frac{3n-2}{8}-\frac{n-6}{4p},-\frac{n-1}{n+4}-\frac{n^{2}+n-6}{(n+4)p}\right\}, \ \ \   & {\rm for}\    n \ {\rm even}.
	\end{array}
	\right.
\end{eqnarray}
The above range $\alpha$   in \eqref{e4.22} for $p>2$ is strictly wider than   \eqref{e1.7}. However, the range $p$ in  \eqref{e4.22}  is not optimal.
What happens when $n\geq 3$ (and $p>2$) remains open.

\medskip

ii)  Under the assumption \eqref{e4.1} of Proposition~\ref{prop4.1},
it follows by  \eqref{e4.4}  that  for $n\geq 2 $ and  $p>2$,
\begin{align*}
	\big\|\sup_{t\in [1,2]}|  {\frak M}_{t}^{\alpha}f |\big\|_{L^p(\Rn)}\leq C \|f\|_{L^p(\Rn)}
\end{align*}
provided that
 ${\rm Re}\, \alpha > \max \big\{ -{(n-1)/p}, s -{(n-1)/2} +{1/p}\big\}$.
It is interesting to describe the full  range of $(p, q)$ such that
%this operator  is
%bounded   from $L^p({\mathbb R^n})$ to  $L^q({\mathbb R^n})$, that is
  \begin{align*}
  	\big\|\sup_{t\in [1,2]}|  {\frak M}_{t}^{\alpha}f |\big\|_{L^q(\Rn)}\leq C \|f\|_{L^p(\Rn)}.
  \end{align*}
For   $\alpha=0$, we refer it to \cite{Lee, Sc1, Sc, SS} and the references therein.
\end{remark}

\medskip

\section{Appendix}
\setcounter{equation}{0}

For the  convenience of the reader, we give the proof of \eqref{ebb} on the complete asymptotic expansion of Bessel function $J_{\beta}$ provided that  ${\rm Re}\, \beta>-1/2$.
Recall that from \cite[(16), p.338]{St1}, the Bessel function $J_{\beta}$ is given by 
$$
J_{\beta}(r)=\frac{(r/2)^{\beta}}{\Gamma(\beta+1/2)\pi^{1/2}}\int_{-1}^{1}e^{irt}(1-t^{2})^{\beta-1/2}dt.
$$

\begin{proposition} \label{prop5.1}
Let ${\rm Re}\, \beta>-1/2$ and $r\geq 1$. We have the complete asymptotic expansion
\begin{align}\label{ee5.1}
	J_\beta(r)\sim  r^{-1/2}e^{ir} \sum_{j= 0}^{\infty}b_{j} r^{-j} + r^{-1/2}e^{-ir} \sum_{j= 0}^{\infty}d_j r^{-j}
\end{align}
for suitable coefficients $b_j$ and $d_j$, and  $b_{0},d_{0}\neq 0$.
\end{proposition}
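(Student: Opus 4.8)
The plan is the classical one: to read off the expansion \eqref{ee5.1} from the two endpoint contributions $t=\pm1$ of the oscillatory integral
\[
	J_\beta(r)=\frac{(r/2)^\beta}{\Gamma(\beta+1/2)\pi^{1/2}}\int_{-1}^1 e^{irt}(1-t^2)^{\beta-1/2}\,dt ,
\]
the endpoint $t=1$ producing the series carried by $e^{ir}$ and $t=-1$ the one carried by $e^{-ir}$. First I would fix $\theta\in C^\infty(\R)$ with $\theta(t)+\theta(-t)\equiv1$ on $[-1,1]$, $\theta\equiv1$ on $[1/2,\infty)$ and $\theta\equiv0$ on $(-\infty,-1/2]$, and split the integral as $P_+(r)+P_-(r)$ according to the cutoffs $\theta(t)$ and $\theta(-t)$. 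The substitution $t\mapsto-t$ shows that $P_-(r)$ is the analogue of $P_+(r)$ with $e^{irt}$ replaced by $e^{-irt}$ (so $P_-(r)=\overline{P_+(r)}$ when $\beta\in\R$), so it suffices to analyse $P_+$.

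In $P_+$ I would substitute $t=1-s$. Since $1-t^2=s(2-s)$ and $\theta(1-s)$ is smooth, supported in $s\in[0,3/2]$ and $\equiv1$ for $s\le1/2$, this gives
\[
	\frac{(r/2)^\beta}{\Gamma(\beta+1/2)\pi^{1/2}}\,P_+(r)=\frac{(r/2)^\beta}{\Gamma(\beta+1/2)\pi^{1/2}}\,e^{ir}\int_0^\infty e^{-irs}\,s^{\beta-1/2}g(s)\,ds ,
\]
where $g(s):=(2-s)^{\beta-1/2}\theta(1-s)\in C_c^\infty([0,\infty))$ and $g(0)=2^{\beta-1/2}\neq0$. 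Thus the whole matter reduces to the asymptotic expansion, as $r\to+\infty$, of $A(r):=\int_0^\infty e^{-irs}s^{\beta-1/2}g(s)\,ds$ — a Watson/Erd\'elyi-type lemma.

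For the expansion of $A(r)$ I would, for each $N$, write $g(s)=\sum_{k=0}^{N-1}\frac{g^{(k)}(0)}{k!}s^k\chi(s)+r_N(s)$ with $\chi\in C_c^\infty$, $\chi\equiv1$ near $0$, and $r_N\in C_c^\infty$ vanishing to order $N$ at $0$, and use the elementary identity
\[
	\int_0^\infty e^{-irs}s^{z-1}\,ds=\Gamma(z)(ir)^{-z}=\Gamma(z)r^{-z}e^{-i\pi z/2},\qquad \mathrm{Re}\,z>0 ,
\]
obtained by rotating the contour of integration onto the ray $\arg s=-\pi/2$. This is exactly where the hypothesis $\mathrm{Re}\,\beta>-1/2$ enters: it guarantees $\mathrm{Re}(\beta+k+1/2)>0$ for all $k\ge0$, so the identity applies with $z=\beta+k+1/2$ and every integral converges at $s=0$. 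The monomial terms then produce $\frac{g^{(k)}(0)}{k!}\Gamma(\beta+k+1/2)(ir)^{-(\beta+k+1/2)}$ (the cutoff $\chi$ costing only an $O(r^{-M})$ error, by nonstationary-phase integration by parts on the region where $\chi\neq1$), while $\int_0^\infty e^{-irs}s^{\beta-1/2}r_N(s)\,ds$ is lower order, controlled by repeated integration by parts since $s^{\beta-1/2}r_N(s)$ vanishes to high order at $0$. Multiplying back by $(r/2)^\beta$ converts $r^{-(\beta+k+1/2)}$ into $2^{-\beta}r^{-1/2-k}$, so $\frac{(r/2)^\beta}{\Gamma(\beta+1/2)\pi^{1/2}}P_+(r)$ takes the form $r^{-1/2}e^{ir}\sum_j b_j r^{-j}$ with $b_0=(2\pi)^{-1/2}e^{-i\pi(2\beta+1)/4}\neq0$; running the same computation for $P_-$ (rotating the contour onto $\arg s=+\pi/2$) yields $r^{-1/2}e^{-ir}\sum_j d_j r^{-j}$ with $d_0=(2\pi)^{-1/2}e^{i\pi(2\beta+1)/4}\neq0$, so that $d_0=\overline{b_0}$ when $\beta\in\R$, consistent with $J_\beta$ being real-valued there. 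Adding the two pieces gives \eqref{ee5.1}.

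The only genuine obstacle is making this Watson-type step rigorous for the complex amplitude power $s^{\beta-1/2}$: the contour-rotation evaluation of $\int_0^\infty e^{-irs}s^{z-1}\,ds$, and the uniform control of the remainders $\int_0^\infty e^{-irs}s^{\beta-1/2}r_N(s)\,ds$ by repeated integration by parts, together with the bookkeeping ensuring that only the half-integer powers $r^{-1/2-j}$ survive. Everything else — the partition of unity, the change of variables, the nonstationary estimates away from the endpoints — is routine. (If one additionally wants the derivative bounds used in Section~\ref{s2}, such as those in \eqref{e2.3}, one notes that these integration-by-parts arguments differentiate freely in $r$, so the expansion may be differentiated term by term.)
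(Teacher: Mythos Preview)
Your proposal is correct and follows essentially the same route as the paper: localize to the two endpoints $t=\pm1$, substitute $t=1-s$ (resp.\ $t=-1+s$), and read off the expansion of $\int_0^\infty e^{\mp irs}s^{\beta-1/2}g(s)\,ds$ with $g$ smooth and compactly supported. The only cosmetic differences are that the paper uses a three-piece partition (two endpoint cutoffs plus a separately handled middle piece, dispatched by nonstationary phase) and then cites the ready-made endpoint asymptotic from Stein's book rather than carrying out the Watson/Erd\'elyi computation by hand as you do.
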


\begin{proof}
Let $\varphi\in C_{c}^{\infty}(\R)$ satisfy $\varphi=1$ on $[-1/4, 1/4]$
 and $\supp \varphi\subseteq [-1/2, 1/2]$. Then we write
\begin{eqnarray}\label{eq5.1}
 \int_{-1}^{1} e^{irt}(1-t^{2})^{\beta-1/2}dt
 =:	J^{(1)}_\beta(r) +J^{(2)}_\beta(r) + E_{\beta} (r),
\end{eqnarray}
where
\begin{align*}
 J^{(1)}_\beta(r)&=  \int_{-1}^{1}e^{irt}(1-t^{2})^{\beta-1/2}\varphi(1-t)dt,\nonumber\\
J^{(2)}_\beta(r)&=\int_{-1}^{1}e^{irt}(1-t^{2})^{\beta-1/2}\varphi(1+t)dt,\nonumber\\
E_{\beta} (r)&=\int_{-1}^{1}e^{irt}(1-t^{2})^{\beta-1/2}\left(1-\varphi(1-t)-\varphi(1+t)\right)dt.
\end{align*}

By a change of variable,  we  apply (d) of 5.1  of \cite[ p.355]{St1} to obtain
\begin{eqnarray}\label{eq5.2}
J^{(1)}_\beta(r)
%&=\int_{-\infty}^{1}e^{irt}(1-t^{2})^{\beta-1/2}\varphi(1-t)dt \\
% &=\int_{-\infty}^{1}e^{irt}(1-t^{2})^{\beta-1/2}\varphi(1-t)dt
 &=&e^{ir}\int_{0}^{\infty}e^{-irs}s^{\beta-1/2}(2-s)^{\beta-1/2}\varphi(s)ds\nonumber\\
%&\sim e^{ir}\sum_{j=0}^{\infty}b_{j}r^{-1/2-\beta-j},
&\sim& e^{ir}\sum_{j=0}^{\infty}\tilde{b}_{j}r^{-1/2-\beta-j},
\end{eqnarray}
where \begin{align*}
\tilde{b}_{j}&=(-1)^{-1/2-\beta-j}i^{j+\beta+1}\frac{j!}{\Gamma(j+\beta+1)}\left(\frac{d}{ds}\right)^{j}\left((2-s)^{\beta-1/2}\varphi(s)\right)\bigg|_{s=0},
%&=i^{-1-2\beta-2j}i^{j+\beta+1}\frac{j!}{\Gamma(j+\beta+1)}(\frac{d}{ds})^{j}\big((2-s)^{\beta-1/2}\big)\big|_{s=0}
\end{align*}
and so
 $
	\tilde{b}_{j}
 =i^{-j-\beta}\frac{j!}{\Gamma(j+\beta+1)}\left(\frac{d}{ds}\right)^{j}\left((2-s)^{\beta-1/2}\right)\bigg|_{s=0}.
$
  A similar argument as above shows that
\begin{eqnarray}\label{eq5.3}
%=\int_{-1}^{\infty}e^{irt}(1-t^{2})^{\beta-1/2}\varphi(1+t)dt\nonumber\\
%&=\int_{-1}^{\infty}e^{irt}(1-t^{2})^{\beta-1/2}\varphi(1+t)dt
J^{(2)}_\beta(r) &=&e^{-ir}\int_{0}^{\infty}e^{irs}(2s-s^{2})^{\beta-1/2}\varphi(s)ds\nonumber\\
 &\sim& e^{-ir}\sum\limits_{j=0}^{\infty} \tilde{d}_{j}r^{-1/2-\beta-j},
\end{eqnarray}
where
$$
\tilde{d}_{j}
%=i^{j+\beta+1}\frac{j!}{\Gamma(j+\beta+1)}(\frac{d}{ds})^{j}\big((2-s)^{\beta-1/2}\varphi(s)\big)\big|_{s=0}
=i^{j+\beta+1}\frac{j!}{\Gamma(j+\beta+1)}\left(\frac{d}{ds}\right)^{j}\left((2-s)^{\beta-1/2}\right)\bigg|_{s=0}.
$$

Finally we estimate the term $E_{\beta} (r)$.   By integration by parts,  we have that for all $k\geq1$ and $N\geq1$,
$$
\Big|\left(\frac{d}{dr}\right)^{k}E_{\beta} (r)\Big|\leq C(1+r)^{-N} 
$$
for some constant  $C=C(\beta, k)$. 
  This, in combination  with \eqref{eq5.1}, \eqref{eq5.2} and \eqref{eq5.3},  implies \eqref{ee5.1} with 
  $$
  	b_j= \frac{(1/2)^{\beta}}{\Gamma(\beta+1/2)\pi^{1/2}}\tilde{b}_j,\quad d_j= \frac{(1/2)^{\beta}}{\Gamma(\beta+1/2)\pi^{1/2}}\tilde{d}_j.
  $$ 
  Obviously,   $b_0\not=0$ and $d_0\not=0$.
   The proof of
  Proposition~\ref{prop5.1} is complete.
\end{proof}

\bigskip

 \noindent
{\bf Acknowledgments.}
The authors thank L. Roncal kindly informing us  the preprint \cite{NRS}  in which sharp conditions
  for the  spherical maximal operator on radial functions were found. L. Yan thanks
  X.T. Duong for helpful  discussions.

The authors   were supported  by National Key R$\&$D Program of China 2022YFA1005700.  N.J. Liu was supported by China
Postdoctoral Science Foundation (No. 2022M723673). L. Song was supported by NNSF of China (No. 12071490).

\bigskip

 \bibliographystyle{amsplain}

 \end{document}